\title{\LARGE \bf
On the structural stability of random systems
}
\author{M. A. Belabbas$^{1}$ and A. Kirkoryan$^{2}$}
\pgfplotsset{compat=newest}
\newtheorem{Definition}{Definition}[section]
\newtheorem{Proposition}{Proposition}[section]
\newtheorem{Theorem}{Theorem}[section]
\newtheorem{Lemma}{Lemma}[section]
\newtheorem{Corollary}{Corollary}[section]
\newtheorem{Remark}{Remark}[section]
\newcommand{\R}{\mathbb{R}}
\newcommand{\Span}{\operatorname{span}}
\newcommand{\bS}{\mathbf{S}}
\newcommand{\bH}{\mathbf{H}}
\newcommand{\bL}{\mathbf{L}}
\newcommand{\G}{\mathcal{G}}
\newcommand{\bP}{\mathbb{P}}
\newcommand{\cN}{\mathcal{N}}
\newcommand{\cT}{\mathcal{T}}
\begin{document}

\date{}
\maketitle
\thispagestyle{empty}

\begin{abstract}
Understanding which system structure can sustain stable dynamics is a fundamental step in the design and analysis of large scale dynamical systems. Towards this goal, we investigate here the structural stability of systems with a random structure. As is usually done, we describe the system's structure through a graph describing interactions between  parts of the system, and we call a graph stable if it describe a structurally stable system, i.e., a system which can sustain stable dynamics. We then consider two Erd\"os-R\'enyi random graph models, and we obtain for each the probability that a graph sampled from these models is structurally stable.
\end{abstract}

\section{Introduction}

Understanding how global system properties, such as stability or controllability, depend on the underlying system's structure is a key step of the design and analysis of large scale networked systems. Indeed, a structural analysis, because it relies on a coarse description of the system,  enables one to focus on the basic characteristics  that make a property hold and to relegate the finer details to a subsequent study~\cite{egerstedtbook2010, shamma_coopcontrol_book}.

We focus here on the structural stability of linear systems. In general,  a  system structure is a set of systems. As is usually done in the case of linear systems, we consider here structures in which entries of the system matrix are either fixed to zero, or are arbitrary real~\cite{lin74}.  We say that a {\it system  structure} has property X (e.g., $X$ is stability of controllability) if there exists an open set of systems with this structure that having property X.  

In the study of structural properties of systems, the case of structural controllability is the most developed one. Necessary and sufficient conditions for a system to be structurally controllable were derived in~\cite{lin74} and, since then, the work has been extended in several directions. Among others, in~\cite{corfmat1976decentralized}, the authors considered more general structures than the ones in~\cite{lin74}, allowing entries of the system matrix to vary co-dependently. Extensions in similar directions were derived later in~\cite{willems1986structural}. In all of the above studies,  the structure of the system is predetermined. In~\cite{o2016conjecture}, the authors consider {\it random} structures with a  different focus:  a unique system   (with system matrix given by the graph Laplacian) is assigned to a structure. The authors then focus on proving an old conjecture of Godsill on graph controllability. In~\cite{shahrivar2017spectral}, a similar model involving the graph Laplacian is studied, but the focus is on {\it connectivity properties} of the underlying graph, which the authors then relate to eigenvalues of the graph Laplacian and robustness properties of the corresponding system.

In this paper, we consider the structural stability of systems with random structure: given a parametric model from which a structure is sampled, we derive the probability that the structure is stable, i.e., the probability that it contains a stable system. Our understanding of  structural stability of linear systems is as of now far less complete than the one of structural controllability. Indeed, whereas simple necessary and sufficient conditions for structural controllability of a system are known~\cite{lin74}, only a set of necessary and a set of sufficient conditions for structural stability have been found so far for general systems~\cite{belabbas2013sparse}. When restricting ourselves to the so-called reciprocal or symmetric structures (defined precisely below), our understanding is far more complete and a set of necessary and sufficient conditions for structural stability has been exhibited in~\cite{kirkoryan2014decentralized}. 

Based on these results, we consider in this paper two different random models for the system structure.  In the first model, entries of the system matrix are free with a given probability $p$, and in the second model, one places $N$ free entries in the matrix uniformly at random. These models, when the system structure is described by a graph, correspond to the well-known Erd\"os-R\'enyi random graph models. We will in particular show the existence of  thresholds for $p$ and $N$ after which most system structures sampled from these model are stable. For example, we will see that placing only about $N=n \log(n)/2$ free entries uniformly at random in a matrix of size $n \times n$ yields with high-probability a stable structure.

\paragraph{Notation}
We briefly review the asymptotic notation used in the paper. Let $n$ be a positive integer:
\begin{enumerate}
    \item $f(n) \in o(g(n))$ means that for {\it any} constant $c$, there exists $n_0$ so that $f(n)< c g(n)$ when $n > n_0$.
    \item $f(n) \in \omega(g(n))$ means that for {\it any} constant $c$, there exists $n_0$ so that $f(n)> c g(n)$ when $n > n_0$.
\end{enumerate}
In particular $f(n) \in o(1)$ means that $f(n)$ goes to zero and $f(n) \in \omega(1)$ means $f(n)$ goes to $\infty$.

\section{When is a system structure stable?}\label{sec:prelim}

Let $n \geq 0$ be a positive integer and denote by $A_{ij}$ the canonical basis vectors in $\R^{n \times n}$ (i.e. $A_{ij} \in \R^{n \times n}$ has zero entries everywhere except for the $ij$-th entry, which is one). Let $E \subseteq \cN^2$ be a set of pairs of elements in $\{1,\ldots,n\}$. The {\bf zero-pattern} $Z_E$ is the vector subspace of $\R^{n \times n}$ given by the linear span of $A_{ij}$, for  $(i,j) \in E$:
$$Z_E := \Span\{ A_{ij} \mid (i,j) \in E\}.$$ 
A zero-pattern describes a {\it system structure}.
Zero-patterns also admit the following two convenient representations: 
\begin{itemize}
\item 	as a matrix with $0/\ast$ entries, where the zero entries are fixed and the $\ast$ entries are arbitrary real numbers,
\item  and as a graph on $n$ nodes $\{1,2,\ldots,n\}$ with edge set $E$.
\end{itemize}
For example, with $n=3$ and $E=\{(1,2),(2,1),(3,2),(1,3),(2,2)\}$, the zero-pattern $Z_E$ can be represented by the $0/\ast$ matrix or the graph depicted in Fig.~\ref{fig:zpg1}.

\begin{figure}
    \centering
\begin{minipage}{0.45\textwidth}
\centering
\large
\begin{align*}
\left[\begin{matrix}
0 & \ast & \ast \\
\ast & \ast & 0  \\
0 & \ast & 0 
\end{matrix}\right]
\end{align*}
\vspace{0.1cm}
\end{minipage}
$\leftrightsquigarrow$
\begin{minipage}{0.4\textwidth}
\centering
\includegraphics{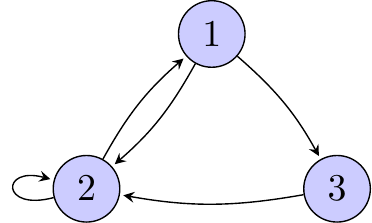}
%
%
%
%
\end{minipage}
    \caption{There is a bijection between zero-patterns and digraphs.}\label{fig:zpg1}
\end{figure}

We are interested in this work in {\it stable} zero patterns: 
\begin{Definition}[Stable zero-patterns/graphs]
A zero-pattern is called  {\bf stable} or Hurwitz if it contains a stable matrix, i.e. a matrix whose  eigenvalues have with negative real parts. A graph corresponding to a stable zero-pattern is called stable.
\end{Definition}
The structural stability of systems was studied  in~\cite{belabbas2013sparse}, where necessary conditions and sufficient conditions for stability were exhibited. These conditions, which we recall below, are most naturally expressed in terms of the graph associated to a zero-pattern. To this end, we introduce a few graph theoretic definitions.

Let $G=(V,E)$ be a directed graph (digraph) on $n$ nodes. By convention, we set $V=\{1,\ldots,n\}$, and we denote edges by their origin and target node pairs: $(i,j)$ is the edge from node $i$ to node $j$.  We call a {\bf loop} or \textbf{self-loop} an edge of the type $(i,i)$, for $i \in V$.  A graph is called {\bf simple} if it is both undirected and without loops. We denote by $\Omega_n$ the {\bf set of all undirected graphs} on $n$ nodes, possibly with loops, and by $\Omega^s_n$ the set of all {\it simple} graphs on $n$ nodes. A subset of $I \subseteq V$ is called an {\bf independent set} if {\it none} of its vertices are connected with an edge:  i.e. $u,v \in I \Rightarrow (u,v) \notin E$. For $I \subseteq V$, we denote by $N(I) \subseteq V$ the {\bf neighbor set} of $I$, i.e. $v \in N(I)$ if there exists $u \in I$ such that $(u,v) \in E$. We illustrate these definitions in Fig.~\ref{fig:illindneigh}.

\begin{figure}\label{fig-nested}
    \centering
    \includegraphics{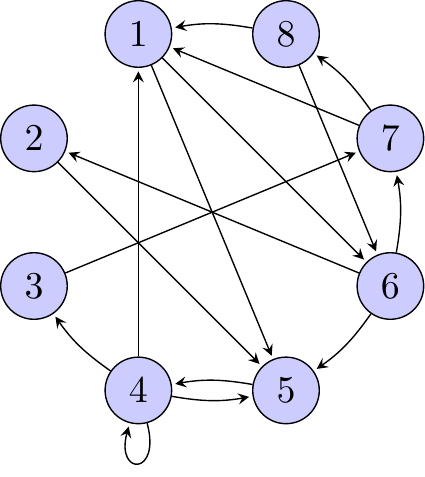}
%
%
%
%
%
%
    \caption{The set \{1, 2, 3\} is an independent set in $G$ and its neighbor set is \{5, 6, 7\}. The sets \{4\}, \{4, 5\}, \{4, 5, 1\}, \{4, 5, 1, 6\}, \{4, 5, 1, 6, 7\}, \{4, 5, 1, 6, 7, 3\}, \{4, 5, 1, 6, 7, 3, 2\}, \{4, 5, 1, 6, 7, 3, 2, 8\} are nested $k$-decompositions. The last decomposition, $(1, 6, 2, 5, 4, 3, 7, 8)$ is also called Hamiltonian. According to Theorem \ref{th:stabcond}, the graph above is stable.}
    \label{fig:illindneigh}
\end{figure}

A {\bf cycle} $C$ in the graph is a sequence, without repetition, of nodes linked by edges. For example, $C=\{i_1,i_2,i_3\}$ is a cycle in $G$ if and only if $(i_1,i_2),(i_2,i_3),(i_3,i_1) \in E$ and  $i_1 \neq i_2 \neq i_3 \neq i_1$. We say that a cycle $C$ covers the set of nodes $S \subseteq V$ if $S \subseteq C$. We say that two cycles are \textbf{disjoint} if their node sets have an empty intersection. 

The following definition is central to the characterization of graph stability~\cite{belabbas2013sparse}: a decomposition in $G$ is a {\it disjoint union} of cyles of $G$, a {\bf k-decomposition} in $G$ is a decomposition that  {\it exactly} $k$ nodes, i.e. so that the union of its constituent cycles is of cardinality $k$. We also refer to $n$-decompositions as {\bf Hamiltonian decompositions}, where  we recall that $n$ is the cardinality of the node set of $G$. For $k_1 < k_2$, let $D_1$ and $D_2$ be $k_1$- and $k_2$-decompositions in $G$.  We say that $D_1$ is {\bf nested} in $D_2$, and write $D_1 \subset D_2$,  if the  node set of $D_2$ (strictly) contains the node set of $D_1$ (note that the edges used by the cycles of the decompositions play no role in the definition of nestedness). We illustrate these definitions in Fig.~\ref{fig-nested}. We can now summarize the main results of structural stability:
\begin{Theorem}[Taken from~\cite{belabbas2013sparse}]

\label{th:stabcond}
A zero-pattern $Z_E \in \mathbb{R}^{n\times n}$ with a corresponding directed graph $G=(V_n,E) $ is stable:
\begin{itemize}
\item[(a)]
only if each of the (strongly) connected components of $G$ is stable;
\item[(b)]
only if for every $k \in \{1,2,\ldots,n\}$ there exists a $k$-decomposition in $G$;
\item[(c)]
if $G$ has a sequence of nested $k$-decompositions $D_1\subset D_2 \subset \ldots \subset D_n$, $k=1,2,\ldots,n$. \end{itemize}
\end{Theorem}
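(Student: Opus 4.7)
The plan is to establish the two necessary conditions (a) and (b) by direct linear-algebraic calculations and to prove the sufficient condition (c) by an explicit construction. For (a), I would permute the vertices to list the strongly connected components in topological (Frobenius) order; under this relabelling every $A\in Z_E$ takes block upper triangular form, with diagonal blocks that are arbitrary realizations of the subpatterns of the individual SCCs. Since the spectrum of a block triangular matrix is the union of the spectra of its diagonal blocks, $A$ is Hurwitz only if every diagonal block is, whence each SCC must itself contain a Hurwitz matrix. For (b), I would expand
\begin{equation*}
\det(sI-A) = s^n + \sum_{k=1}^{n} (-1)^k \pi_k(A)\, s^{n-k},
\end{equation*}
where $\pi_k(A)$ is the sum of all $k\times k$ principal minors of $A$. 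Applying Leibniz's expansion to each principal minor and grouping the permutations by their disjoint-cycle decompositions expresses $\pi_k(A)$ as a signed sum, over all $k$-decompositions $D$ of $G$, of the products $\prod_{(i,j)\in D} A_{ij}$. Hence if $G$ admits no $k$-decomposition, $\pi_k$ is identically zero on $Z_E$; since the coefficients of a Hurwitz polynomial of degree $n\geq 2$ are all strictly positive once normalized, no $A\in Z_E$ can be Hurwitz.

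For (c), the plan is to construct an explicit $A\in Z_E$ whose characteristic polynomial matches a prescribed Hurwitz target, say $q(s)=(s+1)^n$. Relabel so that $D_k$ covers $\{1,\dots,k\}$, set all free entries of $A$ not used by any of the $D_k$ to zero, and assign the remaining weights using a multi-scale ansatz: for a small parameter $\varepsilon>0$, weights on edges of $D_k$ are taken of order $\varepsilon^{\alpha_k}$ for suitable exponents $0=\alpha_1<\alpha_2<\cdots<\alpha_n$, chosen so that in the cycle expansion of $\pi_k$ from (b) the contribution of $D_k$ dominates those of all other $k$-decompositions. Matching each $\pi_k$ to $(-1)^k q_k$ then reduces to a lower-triangular system on the edge weights, to be solved level by level starting from $k=1$.

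The main obstacle is the nondegeneracy of this recursion: at each stage $k$ the dominant contribution to $\pi_k$ must depend nontrivially on at least one weight that has not been fixed at stages $1,\dots,k-1$. This is precisely where the nestedness hypothesis enters, since the strict inclusion of vertex sets $D_{k-1}\subset D_k$ forces the cycle of $D_k$ passing through the newly adjoined vertex to contain at least one edge whose weight is still free at step $k$. Once this degree of freedom is verified at every level, the recursive assignment produces an $A\in Z_E$ with characteristic polynomial $q$; since the Hurwitz locus is open in $\R^{n\times n}$, this single $A$ witnesses the stability of the zero-pattern $Z_E$.
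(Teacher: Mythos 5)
This theorem is imported from \cite{belabbas2013sparse}; the present paper gives no proof of it, so your proposal can only be judged on its own terms. Parts (a) and (b) are correct and complete as sketched: ordering the strongly connected components topologically makes every $A\in Z_E$ block triangular with diagonal blocks ranging over the SCC subpatterns, and the identification of the coefficient of $s^{n-k}$ in $\det(sI-A)$ with a signed sum over $k$-decompositions, combined with the positivity of all coefficients of a Hurwitz polynomial, gives (b). These are the standard arguments.

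Part (c) has a genuine gap, and it sits exactly at the step you yourself flag as the ``main obstacle''. Two concrete problems. First, the matching system is not lower triangular: $\pi_k$ is a sum over \emph{all} $k$-decompositions of $G$ supported on the retained edges, not only over sub-decompositions of $D_k$; for instance $\pi_1$ is the sum of all retained self-loop weights, including self-loops that appear only in later $D_m$, and $\pi_2$ picks up every retained $2$-cycle anywhere in the graph. So the level-$k$ equations involve weights you intend to fix at later levels, and ``solve level by level from $k=1$'' does not close. Second, the multi-scale ansatz is incompatible with matching $(s+1)^n$ exactly: if every retained weight is of order $\varepsilon^{\alpha_k}$ with strictly increasing exponents, the coefficients $\pi_1,\dots,\pi_n$ necessarily have wildly different magnitudes and cannot all equal binomial coefficients. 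What nestedness gives you is stated correctly --- every edge incident to the vertex adjoined at stage $k$ lies in no $D_j$ with $j<k$, hence is a fresh degree of freedom --- but the entire content of the sufficiency proof is the quantitative step you leave unverified: choose the exponents so that, for each $k$, the monomial of $D_k$ strictly dominates that of every other $k$-decomposition as $\varepsilon\to 0$, and then conclude stability not by exact pole placement but from a Hurwitz criterion for positive, strongly scale-separated coefficients (equivalently, by targeting $\prod_k(s+\mu_k)$ with widely separated $\mu_k$ and using openness of the Hurwitz region). That is how the argument in \cite{belabbas2013sparse} proceeds; as written, your recursion asserts the dominance rather than proving it, and the exact-matching formulation would fail even if the dominance were granted.
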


The first two conditions are necessary and the last condition is sufficient. We observe that a graph with one node and a loop is stable.
Obtaining necessary and sufficient conditions in the general case of digraphs is still an open problem.

A zero-pattern is called {\bf symmetric} if the locations of the free variables $\ast$ are symmetric with respect to the main diagonal. More precisely, the zero-pattern $Z_\alpha$ is symmetric if $(i,j) \in E \leftrightarrow (j,i) \in E$.  Note that matrices in a symmetric zero-pattern are not necessarily symmetric. 
We note that in the case of symmetric zero patterns, for every edge $(i, j)$ in its corresponding graph $G$, the opposite edge $(j, i)$ is also in $G$. Therefore, we can consider $G$ to be an undirected graph, possibly containing loops. The notion of $k$-decomposition is naturally carried over to undirected graphs. Quite remarkably, we can show that in the symmetric case, the necessary condition $2$ of Theorem~\ref{th:stabcond} implies the sufficient condition $3$, yielding the following statement:
\begin{Theorem}[\cite{kirkoryan2014decentralized}, Theorem~6]\label{th:resdet}
Let $G$ be a graph corresponding to a symmetric sparse matrix space. Then $G$ is stable \emph{if and only if}:
\begin{enumerate}
\item[(a)] Every node in $G$ is connected to a self-loop.
\item[(b)] The graph $G$ contains an $n$-decomposition.
\end{enumerate} 
\end{Theorem}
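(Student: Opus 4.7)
\emph{Plan.} I would prove the two directions separately. For necessity, condition (b) is immediate from Theorem~\ref{th:stabcond}(b) applied with $k=n$. For condition (a), I would argue by contradiction: if some connected component $H$ of $G$ carried no self-loop, then in the symmetric zero-pattern every diagonal entry of the $H$-block would be fixed to zero, so every matrix in the pattern would satisfy $\tr(A|_H) = 0$ and hence could not be Hurwitz on $H$, contradicting Theorem~\ref{th:stabcond}(a) applied to the component $H$.

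For sufficiency, I would invoke Theorem~\ref{th:stabcond}(c) by explicitly building a nested sequence $D_1 \subset \cdots \subset D_n$ of $k$-decompositions in $G$. The $n$-decomposition $D_n$ is supplied by (b), and I would produce the remaining $D_k$'s top-down by iterated reduction. The auxiliary invariant I would maintain is that at stage $k$ the node set $S_k$ of $D_k$ still contains at least one self-loop from every connected component of $G$ that it intersects; this holds at $k=n$ by (a) and should be preserved at each reduction step.

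The reduction producing $D_{k-1} \subset D_k$ works as follows. Pick a self-loop at $w \in S_k$ and let $C$ be the cycle of $D_k$ containing $w$. If $|C| \geq 2$, I choose $v$ to be a neighbor of $w$ in $C$ and remove it, then re-decompose $C \setminus \{v\}$ by combining the self-loop $\{w\}$ with a perfect matching of $2$-cycles along the resulting chain, handling the even and odd cases of $|C|$ by a parity trick (the symmetric structure turns every edge of $G$ into a $2$-cycle, so such matchings are always available). If instead $|C|=1$, so $C=\{w\}$, I would locate a neighbor $u \in S_k \setminus \{w\}$ of $w$ in $G$; in the base subcase where $u$ lies in a $2$-cycle $\{u,x\}$ of $D_k$, I remove $x$ and replace $\{w\} \sqcup \{u,x\}$ with the single $2$-cycle $\{w,u\}$, which keeps $w$ (and hence its self-loop) inside $S_{k-1}$.

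The hard part is the $|C|=1$ case when this base subcase does not apply---either $u$ lies in a longer cycle, or no direct neighbor of $w$ sits in $S_k$ at all and one must cascade the rewiring along a shortest path in the induced subgraph $G[S_k]$ from $w$ to the first multi-node cycle. My plan is an induction on the length of this path, the base case being the direct swap above and the inductive step consisting of a sequence of local $2$-cycle swaps along the path, each justified by the symmetry of the pattern. Verifying that this cascade remains valid throughout, stays within $S_k$, uses only edges of $G$, and preserves the invariant is the most delicate bookkeeping of the argument.
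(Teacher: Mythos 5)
Your necessity argument is fine (the trace/block-diagonal observation for (a), Theorem~\ref{th:stabcond}(b) with $k=n$ for (b)), and your overall plan for sufficiency --- build a nested chain and invoke Theorem~\ref{th:stabcond}(c) --- is exactly the route the paper alludes to (the statement itself is only cited here from~\cite{kirkoryan2014decentralized}, so there is no in-paper proof to compare against). The gap is in the top-down reduction: your invariant is too weak and your local moves can enter dead ends. Concretely, let $G$ be the $6$-cycle $1\!-\!2\!-\!3\!-\!4\!-\!5\!-\!6\!-\!1$ with a single self-loop at node $1$; it satisfies (a) and (b). Start from the legitimate Hamiltonian decomposition $D_6=\{1,2\}\cup\{3,4\}\cup\{5,6\}$. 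Your $|C|\ge 2$ rule (with $w=1$, $C=\{1,2\}$, delete $v=2$) gives $D_5=\{\text{loop }1\}\cup\{3,4\}\cup\{5,6\}$; your base subcase (the only available neighbor is $u=6$, lying in the $2$-cycle $\{5,6\}$, so delete $x=5$) gives $D_4=\{1,6\}\cup\{3,4\}$ with node set $S_4=\{1,3,4,6\}$. This satisfies your invariant ($G$ has one component and its loop node $1$ is still present), yet $S_4$ admits no nested continuation: the only edges inside $S_4$ are $\{1,6\}$ and $\{3,4\}$ and the only loop is at $1$, so the unique $3$-decomposition inside $S_4$ is $\{\text{loop }1\}\cup\{3,4\}$, the unique $2$-decomposition inside $\{1,3,4\}$ is $\{3,4\}$, and no $1$-decomposition exists inside $\{3,4\}$. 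Your own procedure also breaks down there: after $D_3=\{\text{loop }1\}\cup\{3,4\}$, vertex $1$ is isolated in $G[S_3]$ (its $G$-neighbors $2$ and $6$ have been deleted), so the ``cascade along a shortest path in $G[S_k]$'' cannot even start.

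The reason is that the relevant object is the induced subgraph $G[S_k]$, not the components of $G$: surgery that is always local to the loop vertex $w$ (deleting a neighbor of $w$ in its cycle, or the partner of a matched neighbor) can sever $w$ from the rest of $S_k$ and strand an even, loopless fragment (here the edge $\{3,4\}$), which can be covered by $2$-cycles for a while but makes the chain impossible at small $k$, exactly where parity forces a loop. A correct reduction must be willing to delete vertices far from $w$ and rewire globally with edges of $G$ (in the example, from $S_5=\{1,3,4,5,6\}$ the right move is to delete $3$ and recover $\{4,5\}\cup\{6,1\}$), and it needs a stronger invariant --- e.g.\ that every connected component of $G[S_{k-1}]$ still contains a loop, i.e.\ that (a)--(b) persist for the induced subgraph --- together with a proof that a vertex whose removal preserves this invariant always exists. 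That existence argument is the actual content of the cited Theorem~6 of~\cite{kirkoryan2014decentralized}, and it is precisely what your sketch leaves unproved.
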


\section{Random graphs and zero patterns}

We now introduce the random graphs and random zero-patterns  models considered in this paper.  We can see from Theorem~\ref{th:resdet} that loops in $G$ (resp.  diagonal entries in the corresponding zero pattern $Z$) play a special role in structural stability. Therefore, we treat them differently from edges between distinct nodes (resp. off-diagonal entries in $Z$). We will consider the following two random graph models, which are slight extensions of the ones introduced in the seminal paper~\cite{erdos59}: 

\begin{Definition}[Model A]
Let $p,q \in [0,1]$. A random  (undirected) graph $\mathcal{G}_{p, q}^{n}$ is a random variable which takes values in the set of graphs $\Omega_n$ on vertices $\{1,\ldots,n\}$, such that for every $u, v \in V, u > v,$ the edge $(u, v)$ belongs to $E$ with probability $p$, and for every $u \in V$, the loop $(u,u)$ belongs to $E$ with probability $q$. 
\end{Definition}

To generate a random graph from model A, we throw a biased coin with probability of tails $p$ for each potential edge, and place an edge if the outcome of the throw is tails; no edge otherwise. We then perform the same procedure for all possible loops, but with probability $q$. For any set $S \subseteq \Omega_n$, we have that $\mathbb{P}(G_n \in S)= \mathbb{P} (S)$ where $G_n$ is sampled from model A.

\begin{Definition}[Model B]
A random  graph $\mathcal{G}^{n}_{N, M} = (V, E)$   is a random variable which takes values in the set of  graphs on vertices $\{1,\ldots,n\}$ with {\it exactly} $M$ loops and $N$ non-loop edges, such that each element in this set has equal probability. 
\end{Definition}
To generate a random graph from model $B$, it suffices to place $M$ loops uniformly at random in the graph, and $N$ non-loop edges uniformly at random as well.

We call {\bf graph property} 
a function from the set of graphs to $\{0,1\}$. Being strongly connected or structurally stable are therefore graph properties. We use bold capital letters to denote graph properties. For a graph property $\bH$, we interchangeably say $\bH(G)=1$ or {\it $G$ has property $H$}. We have the following definition:

\begin{Definition}[Monotone properties]
A property $\bH$ of a directed or undirected graph $G$ is called monotone, if adding  {\it edges} to the graph preserves the property.
\end{Definition}

Connectivity and existence of perfect matching are examples of monotone properties of graphs. Being a tree is an example of a non-monotone property.
We require below the following result about monotone properties. It is a straightforward generalization of~\cite{bolob}, Theorem~2.1, to encompass graphs  with loops. 

\begin{Theorem}\label{ivanpenev}
If $H$ is a monotone graph property  and $0 \leq p_1 \leq  p_2 \leq 1$, $0 \leq q_1 \leq q_2 \leq 1$, then
$$\mathbb{P}(\bH(\mathcal{G}^n_{p_1,q_1})=1) \leq \mathbb{P}(\bH(\mathcal{G}^n_{p_2,q_2})=1).$$
\end{Theorem}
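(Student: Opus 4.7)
The plan is to prove Theorem~\ref{ivanpenev} by a standard coupling argument: I will construct $\mathcal{G}^n_{p_1,q_1}$ and $\mathcal{G}^n_{p_2,q_2}$ jointly on a single probability space in such a way that the edge set of the first is almost surely contained in the edge set of the second. Since $\bH$ is monotone, the event $\{\bH(\mathcal{G}^n_{p_1,q_1})=1\}$ will then be contained in $\{\bH(\mathcal{G}^n_{p_2,q_2})=1\}$, and the probability inequality follows immediately.

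More precisely, I would proceed as follows. For every unordered pair $\{u,v\}$ with $u\neq v$, introduce an independent uniform random variable $U_{uv}\sim\mathrm{Uniform}[0,1]$, and for every vertex $u$, an independent uniform $W_u\sim\mathrm{Uniform}[0,1]$. Define a pair of graphs on $\{1,\ldots,n\}$ by declaring, for $i\in\{1,2\}$, that $(u,v)$ is an edge of the $i$-th graph iff $U_{uv}\leq p_i$, and that the loop $(u,u)$ is present iff $W_u\leq q_i$. A direct computation shows that the marginal law of the $i$-th graph is exactly that of $\mathcal{G}^n_{p_i,q_i}$, since each non-loop edge is included independently with probability $p_i$ and each loop independently with probability $q_i$. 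The conditions $p_1\leq p_2$ and $q_1\leq q_2$ ensure that every edge (loop or non-loop) present in the first graph is also present in the second, so the first is a subgraph of the second with probability one.

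With the coupling in place, monotonicity of $\bH$ gives, pointwise on the underlying probability space, the implication $\bH(\mathcal{G}^n_{p_1,q_1})=1 \Rightarrow \bH(\mathcal{G}^n_{p_2,q_2})=1$, from which the stated inequality on probabilities is immediate. There is no substantive obstacle here: the only point that requires care beyond the argument given in~\cite{bolob} is the inclusion of self-loops, which is handled by treating the loop variables $W_u$ exactly in parallel with the edge variables $U_{uv}$ and observing that the definition of a monotone property in our setting refers to adding any edges, loops included. This slight extension is precisely what is needed since our graph models $\mathcal{G}^n_{p,q}$ explicitly allow self-loops.
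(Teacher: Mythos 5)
Your coupling argument is correct and is essentially the standard proof underlying the result the paper cites: the paper gives no written proof, merely invoking the straightforward generalization of~\cite{bolob}, Theorem~2.1, to graphs with loops, and that generalization is exactly the monotone coupling you construct (with the loop variables $W_u$ treated in parallel with the edge variables $U_{uv}$). No gap; your write-up simply makes explicit what the paper leaves to the reference.
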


The graph properties relevant to structural stability are the following: 
\begin{Definition}
Let $G$ be a random graph with loops. By $\bS, \bH, \bL$ we denote the following graph properties:
\begin{description}
    \item[$\bS:$ ] \hspace{.1cm} $\bS(G) =1$ if $G$ is structurally stable, and zero otherwise;
    \item[$\bH:$ ] $\bH(G) =1$ if $G$ contains a Hamiltonian decomposition, and zero otherwise;
    \item[$\bL:$ ] \hspace{.1cm}$\bL(G)=1 $ if each component of $G$ contains a self-loop, and zero otherwise.
\end{description}
\end{Definition}
These properties are clearly monotone. From Theorem~\ref{th:resdet}, we obtain that a symmetric graph $G$ is stable if and only if $\bH(G)\bL(G)=1$, or, more succinctly, $\bS(G)=\bH(G)\bL(G)$.

\begin{Remark}
From Theorem~\ref{theoremtable1}, it is clear that a graph with a self-loop and a Hamiltonian cycle is stable. It was shown in~\cite{} that for an Erd\"os-Renyi model,

Probability of having a Hamiltonian cycle is low, we need to look at decompositions. threshold for Hamiltonian cycle is $p = \frac{\log n +\log \log n + \omega(1)}{n},$ where $\omega$ is any function tending to infinity when $n \to \infty$. By contrast, our threshold will be much smaller

\end{Remark}

\section{Stability of Random Graphs}

We now derive the probability that a random graph sampled from model A or model B is stable. 
The proof of the main results for both models relies on first establishing a characterization of symmetric digraphs {\it without}  Hamiltonian decompositions.  We call such digraphs {\bf thin}, since all matrices in the corresponding zero patterns  have a determinant equal to zero.

\begin{Definition}[Thin symmetric digraphs]
A symmetric digraph $G$ on $n$ nodes is called {\bf thin} if it does not have a Hamiltonian decomposition. We denote by $\cT^n$ the set of all thin graphs on $n$ nodes.
\end{Definition}

The characterization of thin graphs provided in the next two Lemmas is a fundamental ingredient to the proofs below, and follows closely~\cite{erdos59}:

\begin{Lemma}\label{stupid}
A symmetric digraph $G$ is thin \emph{if and only} if it contains an independent set $I = \{u_1, u_2, \ldots , u_k\}$, such that $|N(I)| = k - 1$ for some $1\leq k\leq n$.
\end{Lemma}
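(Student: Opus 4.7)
The plan is to reduce the statement to an application of Hall's theorem (equivalently, the Frobenius--K\"onig theorem) on the bipartite graph $B$ whose biadjacency matrix is the adjacency matrix of $G$. A Hamiltonian decomposition corresponds to a permutation $\sigma$ with $(i,\sigma(i))\in E$ for every $i$, i.e., a perfect matching in $B$, so $G$ being thin is exactly the failure of Hall's condition on $B$.

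The easy direction $(\Leftarrow)$ is a pigeonhole argument: if $I$ is independent with $|N(I)|=|I|-1$ and a Hamiltonian decomposition $\sigma$ exists, then $\sigma(i)\in N(\{i\})\subseteq N(I)$ for each $i\in I$, so $\sigma(I)\subseteq N(I)$; but $\sigma$ is injective, giving $|I|=|\sigma(I)|\leq|N(I)|=|I|-1$, a contradiction.

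For $(\Rightarrow)$, I would first invoke Hall's theorem to produce some $R\subseteq V$ with $|N(R)|<|R|$ and take such an $R$ of minimum cardinality; a short check (removing any vertex from $R$ decreases $|R|$ by $1$ and $|N(R)|$ by at most $1$) forces $|N(R)|=|R|-1$. The heart of the proof is to extract an independent witness from $R$. I would introduce the ``independent core'' $R':=R\setminus N(R)$ and establish: (i)~$R'\neq\emptyset$, since $|R\cap N(R)|\leq |N(R)|=|R|-1$; (ii)~$R'$ is independent, since any edge or self-loop inside $R'\subseteq R$ would place its endpoints into $N(R)$, contradicting $R'\cap N(R)=\emptyset$; and (iii)~$|N(R')|\leq|R'|-1$. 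Step (iii) is where the symmetry of $G$ is essential: for any $x\in R\cap N(R)$, if $u\in R$ satisfies $(u,x)\in E$, then by symmetry $(x,u)\in E$, so $u\in N(\{x\})\subseteq N(R)$ (as $x\in R$); hence every $R$-neighbour of $x$ lies in $R\cap N(R)$, so $x\notin N(R')$. This gives the inclusion $R\cap N(R)\subseteq N(R)\setminus N(R')$ and therefore
\[
|N(R')|\leq|N(R)|-|R\cap N(R)|=(|R|-1)-(|R|-|R'|)=|R'|-1.
\]

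To finish, if $|N(R')|=|R'|-1$ I set $I=R'$; otherwise I take a minimum subset $I\subseteq R'$ with $|N(I)|<|I|$, which inherits independence from $R'$ and satisfies $|N(I)|=|I|-1$ by the same minimality argument used for $R$. The main obstacle is precisely step (iii): a minimum Hall-violator need not be independent in the digraph, and the passage from $R$ to the symmetric-core $R\setminus N(R)$, combined with the use of symmetry to guarantee that each ``bad'' vertex of $R\cap N(R)$ costs an element of $N(R)\setminus N(R')$, is where the real content of the lemma resides.
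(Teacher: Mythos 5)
Your proof is correct and follows essentially the same route as the paper's: both reduce a Hamiltonian decomposition to a perfect matching in the bipartite double graph and, in the hard direction, extract from a Hall violator $R$ the independent core $R\setminus N(R)$ (exactly the paper's set $I_2'$), using symmetry to show its neighborhood avoids $R\cap N(R)$, and finish by minimizing a deficient independent set. One small slip: your parenthetical claim that deleting a vertex decreases $|N(R)|$ by at most one is false in general and is not what the minimality argument needs; it suffices to use the monotonicity $N(R\setminus\{v\})\subseteq N(R)$, which gives $|N(R)|=|R|-1$ just as well.
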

The proof  is a straightforward application of Hall's marriage theorem. 
\begin{proof}
To the digraph $G=(V, E)$, we can assign uniquely the bipartite graph $B_G = (V', V'', E^*)$, where $V' = \{1', 2', \ldots , n'\}$, $V'' = \{1'', 2'', \ldots , n''\}$, and $(i, j) \in E$ if and only if $(i', j') \in E^*$. It is easy to see that $G$ contains a Hamiltonian decomposition {\it if and only if} $B_G$ contains a perfect matching.

First, assume that there exists an independent set $I = \{u_1, u_2, \ldots , u_k\} \subset G$ such that $|N(I)| < k$. Then the same is true for the corresponding set $I' = \{u_1', u_2', \ldots ,u_k'\} \subset V_1$ in the bipartite graph. Therefore, applying Hall's Theorem~\cite{hallmt}\footnote{Recall that Hall's Theorem, when applied to a bipartite graph $B=(V_1 \cup V_2,E)$ with equal size node sets (i.e. $|V_1|=|V_2|$), says that there exists a perfect matching {\it if and only if} for all subsets $W \in V$, such that $W \in V_1$ or $W \in V_2$, we have $|W| \leq |N(W)|$.}, we conclude that $B_G$ does not contain a perfect matching, and therefore $G$ does not contain a Hamiltonian decomposition.

Now assume that $G$ does not contain a Hamiltonian decomposition, and thus $B_G$ does not contain a perfect matching. Applying Hall's Theorem again, we conclude that there exists a subset $I' = \{u_1', u_2', \ldots , u_k'\} \subset V_1$, such that $N(I') < k$. We decompose $I'$ into two disjoint subsets as follows:  $I' = I_1' \cup I_2'$, where $I_1' = \{u_i' \in I'\mid u_i'' \in N(I')\}, I_2'=\{u_i' \in I' \mid u_i'' \notin N(I')\}$. By construction, $|N(I')| \geq |I_1'|$. We conclude from this observation, and the fact that $|I_1'|+|I_2'|=k$, that the set $I_2'$ is non-empty, because otherwise $|N(I')| \geq k$. We now show that the set $I_2 \subseteq V$ corresponding to $I_2'$ is independent with $N(I_2) <|I_2|$. To this end, set $I_1'' = \{u_i'' \in V''|u_i' \in I_1'\}$; if the node $u_i'' \in I_1''$, then from the definition of $I_1'$,  $u_i'' \in N(I')$. Furthermore, if a node $u_i'' \in N(I_2')$, then $u_i'' \notin N(I')$ by definition of $I_2'$. Thus $N(I_2') \cap I_1'' = \emptyset$ and  
$$|N(I_2')| \leq |N(I')| - |I_1''| < k - |I_1'| = |I_2'|.$$

Therefore, the  set $I_2 \in V$ is indeed an independent set with $|N(I_2)| < |I_2|$. Now choose the smallest independent set $I$, such that $|N(I)| < |I|$. If $|N(I)| < |I| -1 $, then we can remove any vertex $v$ from $I$ and get an independent set $J = I \setminus v$ for which $|N(J)| < |J|$. This is a contradiction, and thus $|N(I)| = |I| - 1$, which concludes the second part of the proof.
\end{proof}

We now exhibit a decomposition of the set $\cT^n$ of thin digraphs into disjoint subsets.

\begin{Definition}\label{def:defK}
We let $F_k \subseteq \Omega_n$  be the following event: a graph $G=(V,E)$ on $n$ nodes belongs to $F_k$ if and only if:
\begin{enumerate}\item 
there exists  an independent set $I \subseteq V$ with $k$ vertices, such that $|N(I)| = k - 1$;
\item for every independent set $J\subseteq V$ with $l < k$ vertices, $|N(J)| \neq l - 1$.
\end{enumerate}
\end{Definition}
Note that with a slight abuse of notation, we use interchangeably symmetric digraphs an undirected graphs.  We have the following Lemma:

\begin{Lemma}\label{lem:TunionFk} The sets $F_k$ from  Def.~\ref{def:defK} are \emph{disjoint} and  $\cup_{k=1}^{\lceil(n+1)/2\rceil} F_k= \cT^n$.
\end{Lemma}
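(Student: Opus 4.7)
The plan is to prove the lemma in three steps, each of which is essentially bookkeeping on top of Lemma~\ref{stupid}.

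First, I would handle disjointness directly from Definition~\ref{def:defK}. Suppose $G \in F_k \cap F_{k'}$ with $k < k'$. By part~1 of the definition for $F_k$, there is an independent set $I$ of size $k$ with $|N(I)|=k-1$. But part~2 of the definition for $F_{k'}$ forbids the existence of any independent set of size $l < k'$ with $|N(J)|=l-1$; taking $l=k$ gives the contradiction. Hence the $F_k$ are pairwise disjoint.

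Next, to show the union equals $\cT^n$, I would argue both inclusions using Lemma~\ref{stupid}. For $\cup_k F_k \subseteq \cT^n$: any $G \in F_k$ satisfies the hypothesis of Lemma~\ref{stupid} (via its independent set $I$ of size $k$ with $|N(I)|=k-1$), hence $G$ is thin. For $\cT^n \subseteq \cup_k F_k$: if $G$ is thin, Lemma~\ref{stupid} guarantees at least one independent set $I$ with $|N(I)|=|I|-1$; let $k^\star$ be the smallest cardinality among all such independent sets. By minimality, no independent set of size $l<k^\star$ satisfies $|N(J)|=l-1$, so $G \in F_{k^\star}$.

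Finally, I would pin down the range $1 \le k \le \lceil (n+1)/2 \rceil$. The key observation is that for any independent set $I$ in a graph (possibly with loops), the definition of independent set forbids edges between any two vertices of $I$, so in particular no vertex of $I$ can lie in $N(I)$; that is, $I \cap N(I) = \emptyset$. Therefore if $G \in F_k$ with witnessing independent set $I$ of size $k$ and $|N(I)| = k-1$, we get
\begin{equation*}
n \ge |I| + |N(I)| = k + (k-1) = 2k - 1,
\end{equation*}
which yields $k \le (n+1)/2$, i.e.\ $k \le \lceil (n+1)/2 \rceil$. Combining this with the two inclusions above gives $\cT^n = \bigcup_{k=1}^{\lceil (n+1)/2 \rceil} F_k$.

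I do not anticipate a real obstacle: the content of the lemma is essentially an indexing convention on top of Lemma~\ref{stupid}, and the only mildly nontrivial point is justifying the ceiling bound, which reduces to noting that an independent set is disjoint from its neighbor set.
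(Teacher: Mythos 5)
Your proof is correct and follows essentially the same route as the paper: disjointness via condition~2 of Def.~\ref{def:defK}, the union via Lemma~\ref{stupid} together with taking a minimal witnessing independent set, and the range bound from $|I|+|N(I)|=2k-1\leq n$. You are in fact slightly more explicit than the paper (which compresses the two inclusions and the minimality step), but there is no substantive difference in approach.
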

\begin{proof}
	
If $G \in F_k$, then by condition 2, it does not have an independent set of cardinality $l-1$, for any $l<k$, with $|N(J)|=l-1$. Hence, such $G$ cannot belong to any $F_l$, $l < k$ and we conclude that $F_l \cap F_k = \emptyset$ for $l<k$, from which the first assertion follows. 

Furthermore, if $G$ contains an independent set $I$ of size $m$ with $|N(I)|=m-1$, then $m + m - 1 \leq n$, and therefore $G$ belongs to some $F_k$, $k \leq \lceil (n+1)/2 \rceil$.  Therefore, $\cT^n = \cup_{k=1}^{\lceil(n+1)/2\rceil} F_k$.
\end{proof}

\subsection{Structural stability for model A}

We now address random graphs sampled from model A specifically. The first step is to characterize the general structure of graphs  in $\G^n_{p,q}$ for $p=\frac{\log(n) + c + o(1)}{n}$. In particular,  the number of isolated vertices follows a Poisson distribution, and most graphs are made of isolated vertices and a large component (of size more than half the number of nodes).

\begin{Proposition}\label{prop:bollo}
Let $p= \frac{\log(n) + c + o(1)}{n}$ with $c \in \R$. Then, the number of isolated vertices in the graph $G = \G^n_{p,q}$ converges in distribution to a Poisson random variable $P_{\lambda}$, where $\lambda = e^{-c}$. 
\end{Proposition}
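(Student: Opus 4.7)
The plan is to use the method of factorial moments, which is the standard route for proving Poisson convergence of counts of rare, weakly dependent events. Let $X_n$ denote the number of isolated vertices of $G = \G^n_{p,q}$, where a vertex is \emph{isolated} if no non-loop edge is incident to it; the presence or absence of a self-loop plays no role in this definition, so $X_n$ depends only on $p$ and not on $q$. I would then write $X_n = \sum_{v=1}^n \mathbf{1}_v$ with $\mathbf{1}_v$ the indicator that $v$ is isolated, and compute, for each fixed $k \geq 1$,
\begin{equation*}
\E\bigl[(X_n)_k\bigr] \;=\; \sum_{(v_1,\ldots,v_k)\text{ distinct}} \bP(v_1,\ldots,v_k \text{ all isolated}) \;=\; (n)_k\,(1-p)^{\binom{k}{2}+k(n-k)},
\end{equation*}
the last equality holding because isolating a prescribed set of $k$ vertices forbids exactly $\binom{k}{2}+k(n-k)$ mutually independent Bernoulli($p$) edges.

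Next, I would insert $p = (\log n + c + o(1))/n$ and take the $n\to\infty$ asymptotics of the three factors. Since $k$ is fixed and $p\to 0$, one has $(n)_k \sim n^k$ and $(1-p)^{\binom{k}{2}}\to 1$. The Taylor expansion $\log(1-p) = -p + O(p^2)$ together with $k(n-k)\,p = k\log n + kc + o(1)$ and $k(n-k)\,p^2 = O((\log n)^2/n) = o(1)$ gives
\begin{equation*}
(1-p)^{k(n-k)} \;=\; n^{-k}\,e^{-kc}\,(1+o(1)).
\end{equation*}
Multiplying the three factors yields $\E[(X_n)_k] \to e^{-kc} = \lambda^k$, which is precisely the $k$-th factorial moment of a Poisson$(\lambda)$ random variable.

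Finally, since the Poisson law is determined by its moments and its factorial moments grow only polynomially in $k$, convergence of every factorial moment to $\lambda^k$ implies convergence in distribution $X_n \Rightarrow P_\lambda$ by the classical method of moments (see, e.g.,~\cite{bolob}). The main technical point, and essentially the only place where care is required, is controlling the $o(1)$ error term of $p$ after multiplication by $n-k$, and verifying that the quadratic correction $(n-k)p^2$ is negligible; both are routine because $k$ is fixed and $p^2 = O((\log n)^2/n^2)$. The parameter $q$ does not enter the argument at any stage, which is consistent with the statement of the Proposition.
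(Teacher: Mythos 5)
Your proof is correct. The paper does not actually prove this proposition itself---it simply defers to the literature (the reference cited after the statement)---and your argument is the standard one used there: the factorial-moment computation $\E[(X_n)_k]=(n)_k(1-p)^{\binom{k}{2}+k(n-k)}\to\lambda^k$ followed by the method of moments, together with the correct observation that ``isolated'' must be read as having no incident non-loop edge (consistent with how the paper later conditions on the events ${\cal J}^{n,k}$), so that $q$ plays no role. One tiny slip: the factorial moments of a Poisson$(\lambda)$ variable are $\lambda^k$, which grow geometrically rather than ``polynomially'' in $k$, but this is immaterial since the Poisson law is determined by its moments, so the convergence-in-distribution conclusion stands.
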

A proof of this well-known statement can be found in, e.g.,~\cite[Theorem 3.1]{frieze2015book}.
We will also need the following result:
\begin{Lemma}\label{lem:tech}
Let $p= \frac{\log(n) + c + o(1)}{n}$ with $c \in \R$. The probability that $\G^n_{p,q}$  has a connected component of size strictly larger than $1$ and smaller or equal to $n/2$ is $o(1)$.
\end{Lemma}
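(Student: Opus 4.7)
The plan is to show that the expected number of connected components of size $k$, for all $k$ with $2 \leq k \leq n/2$, is $o(1)$, and then apply Markov's inequality. As a preliminary observation, self-loops play no role in the connectivity structure of the underlying graph, so I may ignore the parameter $q$ and work only with the Erd\"os-R\'enyi graph induced by the non-loop edges.

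For a fixed subset $S \subseteq V$ of cardinality $k$, the event that $S$ is exactly a connected component of $\G^n_{p,q}$ decomposes as the intersection of two \emph{independent} events (they concern disjoint sets of potential edges): (i) $S$ is connected in its induced subgraph, and (ii) no edges join $S$ to $V \setminus S$. Event (ii) has probability exactly $(1-p)^{k(n-k)}$, and event (i) has probability at most $k^{k-2} p^{k-1}$ by a union bound over the $k^{k-2}$ labeled spanning trees (Cayley's formula). A further union bound over the $\binom{n}{k}$ possible choices of $S$ gives
$$\bP(\exists\, \text{component of size exactly } k) \;\leq\; \binom{n}{k}\, k^{k-2}\, p^{k-1}\, (1-p)^{k(n-k)},$$
and the task reduces to showing that the sum of these bounds over $2 \leq k \leq n/2$ is $o(1)$.

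I would split the range into two regimes. In the \emph{small} regime, $2 \leq k \leq n/(2\log n)$, one has $n-k = n(1-o(1))$, so $pk(n-k) \geq (\log n + c)(1-o(1))k$ and therefore $(1-p)^{k(n-k)} \leq n^{-k} e^{O(k/\log n)}$. Combined with $\binom{n}{k} \leq n^k/k!$ and the Stirling bound $k! \geq (k/e)^k$, the summand simplifies to the form $\frac{C_1}{k^2} \bigl(C_2 \log n / n\bigr)^{k-1}$, a geometric-like series whose ratio tends to $0$; its sum is dominated by the $k=2$ term, which is $O(\log n / n) = o(1)$. In the \emph{large} regime, $n/(2\log n) < k \leq n/2$, I use only the uniform bound $n-k \geq n/2$, giving $(1-p)^{k(n-k)} \leq n^{-k/2} e^{-ck/2}$. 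The summand then takes the form $\frac{n}{k^2 \log n}\bigl(e^{1-c/2}\log n/\sqrt{n}\bigr)^k$; since $(\log n /\sqrt n)^k$ is super-exponentially small throughout this regime, the total contribution is negligible even after accounting for the (at most $n$) terms.

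The main technical obstacle is the choice of split point. The uniform bound $(1-p)^{k(n-k)}\leq n^{-k/2}$, which is valid on all of $k \leq n/2$, is too lossy for small $k$ and produces a sum of order $\log n$ rather than $o(1)$; one must exploit the sharper estimate $(1-p)^{k(n-k)} \approx n^{-k}$ available when $k \ll n$. Conversely, that sharper bound breaks down once $k$ is a constant fraction of $n$. Placing the cutoff at $k \sim n/\log n$ is what makes each of the two regimes contribute $o(1)$ simultaneously.
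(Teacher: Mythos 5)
Your proposal is correct and takes essentially the same route as the paper: the identical first-moment bound $\binom{n}{k}k^{k-2}p^{k-1}(1-p)^{k(n-k)}$ via a union bound over Cayley's $k^{k-2}$ spanning trees and over subsets, followed by summation over $2\le k\le n/2$; the paper only places the regime split differently, using the crude $(1-p)^{kn/2}$ estimate for all $k\ge 3$ and reserving the sharp exponent for $k=2$ alone. One cosmetic slip: in your small regime the correction factor is $e^{O(k)}$, not $e^{O(k/\log n)}$ (since $(\log n+c)\cdot\frac{k}{2\log n}=\Theta(k)$), but this does not affect your stated form $\frac{C_1}{k^2}\left(C_2\log n/n\right)^{k-1}$ or the conclusion.
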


\begin{proof}
Denote by $\bP(k,n,p)$ the probability that $G =\G^n_{p,q}$ has a connected component of size $k$. Recalling that if a subset of cardinality $k$ has a connected component then  it contains a spanning tree, and that the number of spanning trees is $k^{k-2}$, we obtain the bound 
$$\bP(k,n,p) \leq {n \choose k} k^{k-2} p^{k-1} (1-p)^{k(n-k)}.$$

Replacing $p$ by its value in the Lemma's statement, and using Stirling's approximation for the binomial coefficient, we get 

\begin{align*}\bP(k,n,p) &\leq \frac{e^kn^k}{k^k} k^{k-2}  \left(\frac{\log(n) + c + o(1)}{n}\right)^{k-1} \left(1-\frac{\log(n) + c + o(1)}{n}\right)^{kn/2}\\
&\ll    \frac{e^kn^k}{k^k} k^{k-2}  \left(\frac{\log(n)}{n}\right)^{k-1}  e^{-(\log(n)-c)k/2}\\
&\ll    e^{k(1+c/2)} k^{-2}  n \left(\log(n)\right)^{k-1}  \frac{1}{n^{k/2}}.\\
\end{align*} For $k \in \{5,\ldots,n/2\}$, each $\bP(k,n,p)$ is $o(1/n)$, and for $k=3,4$, $\bP(k,n,p) = o(1).$ For $k=2$, we have directly

$$\bP(2,n,p) \ll Cn^2 \frac{\log n}{n} \left( 1-\frac{\log n}{n} \right)^{2(n-2)} \ll C  n\log n  \frac{1}{n^2} \left( 1-\frac{\log n}{n} \right)^{-4} = o(1).$$
We thus conclude that $\sum_{k=1}^{n/2} \bP(k,n,p) =o(1).$
\end{proof}

We now introduce the following two events in $\Omega_n$: let ${\cal I}^{n,k}$ is the event that $G$ has {\it exactly} $k$ isolated vertices, and ${\cal J}^{n,k}$ the event that $G$ has exactly $k$ isolated vertices and a {\it unique} connected component of size $n-k$. As a Corollary of the previous result, we show that for Erd\"os-R\'enyi graphs, ${\cal I}^{n,k}$ and ${\cal J}^{n,k}$ are very close:

\begin{Corollary}\label{coro:ia}
Let $p= \frac{\log(n) + c + o(1)}{n}$ with $c \in \R$ and $G = \G^n_{p,q}$.  Then, for $0 \leq k \leq n$, $\bP({\cal I}^{n,k} - {\cal J}^{n,k})=o(1)$. 
\end{Corollary}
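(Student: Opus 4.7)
The plan is to observe that $\mathcal{J}^{n,k} \subseteq \mathcal{I}^{n,k}$: by construction, a graph with exactly $k$ isolated vertices together with a unique connected component of size $n-k$ has, in particular, exactly $k$ isolated vertices. Consequently the symmetric difference $\mathcal{I}^{n,k} \triangle \mathcal{J}^{n,k}$ reduces to the set difference $\mathcal{I}^{n,k} \setminus \mathcal{J}^{n,k}$, which is the event that $G$ has exactly $k$ isolated vertices but whose $n-k$ non-isolated vertices split into at least two connected components.

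Next I would dispose of the trivial boundary cases. For $k = n$ the two events coincide (the empty graph). For $k = n-1$ the event $\mathcal{I}^{n,k}$ is vacuous: the lone remaining vertex would itself have to be isolated. For $k \in \{n-2, n-3\}$ there are only $2$ or $3$ non-isolated vertices, which cannot split into two components of size $\geq 2$, so again $\mathcal{I}^{n,k} \setminus \mathcal{J}^{n,k} = \emptyset$. Thus the interesting regime is $0 \leq k \leq n-4$.

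In that regime, if $G \in \mathcal{I}^{n,k} \setminus \mathcal{J}^{n,k}$, then its $n-k$ non-isolated vertices partition into two or more connected components, each of size at least $2$. The smallest of these components then has cardinality in $[2, \lfloor (n-k)/2 \rfloor] \subseteq [2, \lfloor n/2 \rfloor]$. In particular, $G$ contains a connected component of size strictly greater than $1$ and at most $n/2$. We therefore have the inclusion
\[
\mathcal{I}^{n,k} \setminus \mathcal{J}^{n,k} \subseteq \bigl\{ G : G \text{ has a connected component of size in } [2, n/2] \bigr\}.
\]

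The main step, and really the only substantive one, is to invoke Lemma~\ref{lem:tech} on the right-hand side: it asserts that this event has probability $o(1)$. Since the bound furnished by Lemma~\ref{lem:tech} does not depend on $k$, the conclusion $\bP(\mathcal{I}^{n,k} \setminus \mathcal{J}^{n,k}) = o(1)$ holds uniformly over $0 \leq k \leq n$. No genuine obstacle is expected here; the statement is essentially a repackaging of Lemma~\ref{lem:tech}, and the only care required is in verifying the trivial cases and noticing the uniformity in $k$.
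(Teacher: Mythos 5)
Your proposal is correct and follows essentially the same route as the paper: decompose $\mathcal{I}^{n,k}$ into $\mathcal{J}^{n,k}$ plus the event of having $k$ isolated vertices and several nontrivial components, observe that the latter forces a component of size in $[2,n/2]$, and invoke Lemma~\ref{lem:tech}. The extra bookkeeping on boundary cases and uniformity in $k$ is harmless but not needed beyond what the paper already does.
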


\begin{proof}
The event ${\cal I}^{n,k}$ is the disjoint union of ${\cal J}^{n,k}$ and the event that $G$ has $k$ isolated vertices and two or more components of size larger than 1. Since at least one of these components must have size smaller or equal to $n/2$, the result now follows from Lemma~\ref{lem:tech}. 
\end{proof}

\begin{restatable}[]{Proposition}{propunioA}
\label{prop:unioA}
For $0 \leq p, q \leq 1$ given rates, the probability that $G=\mathcal{G}^n_{p,q}$  is unstable is asymptotically equal to the probability that $G$ contains a component without a self-loop, i.e.
\begin{align}
    \mathbb{P}(\bS({G}) = 0) = \mathbb{P}(\bL({G}) = 0) + o(1).
\end{align}
\end{restatable}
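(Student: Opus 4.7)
My plan is to reduce the statement to bounding $\mathbb{P}(\bH(G)=0,\bL(G)=1)$, then partition this event by the size $k$ of a minimal witness to thinness, and bound each piece. By Theorem~\ref{th:resdet}, $\bS=\bH\cdot\bL$ for symmetric graphs, so $\{\bS=0\}=\{\bL=0\}\sqcup\{\bH=0,\bL=1\}$ is a disjoint union, and
\begin{align*}
\mathbb{P}(\bS(G)=0)-\mathbb{P}(\bL(G)=0)=\mathbb{P}(\bH(G)=0,\bL(G)=1).
\end{align*}
Using Lemmas~\ref{stupid} and~\ref{lem:TunionFk}, I would partition $\{\bH=0\}$ into the disjoint pieces $F_k$, so that
\begin{align*}
\mathbb{P}(\bH(G)=0,\bL(G)=1)=\sum_{k=1}^{\lceil(n+1)/2\rceil}\mathbb{P}(F_k,\bL(G)=1).
\end{align*}
The $k=1$ term vanishes exactly: an $F_1$ witness $I=\{v\}$ with $|N(I)|=0$ is an isolated vertex carrying no self-loop (by independence), so the component $\{v\}$ would violate $\bL=1$.

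For $k\geq 2$ I would apply a union bound over witness pairs $(I,N)$ of sizes $(k,k-1)$, with the probability per pair factoring as: (i)~$I$ is independent, contributing $(1-p)^{\binom{k}{2}}(1-q)^{k}$; (ii)~each vertex of $I$ has at least one neighbor in $N$, forced by the minimality of $k$ in $F_k$ (otherwise $\{u\}$ for $u\in I$ with no neighbor would give a smaller witness); (iii)~no vertex outside $I\cup N$ is adjacent to $I$, contributing $(1-p)^{k(n-2k+1)}$; and (iv)~the component of $I$ carries a self-loop, imposed by $\bL=1$ (since $I$ itself has none), yielding an extra factor coming either from a self-loop in $N$ or from a path through $N$ to a looped vertex outside $I\cup N$. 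Multiplying by the $\binom{n}{k}\binom{n-k}{k-1}$ choices of witness and summing over $k\geq 2$ should yield $o(1)$.

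The main obstacle is making the union bound tight enough to sum to $o(1)$ in the threshold regime $p\sim\log n/n$ that motivates the surrounding results (Proposition~\ref{prop:bollo}, Lemma~\ref{lem:tech}). A naive union bound that allows an arbitrary incidence pattern between $N$ and $I$ overcounts drastically, since configurations where some $u\in I$ has no neighbor in $N$ properly belong to $F_1$, not $F_k$; correctly enforcing the minimality in~(ii) yields roughly a $p^{k-1}$ factor for the required incidences in $N$. Combining this with the factor from~(iv) and the $(1-q)^k$ factor in~(i) is what should drive each term of the sum into being summably small; the component-structure information from Lemma~\ref{lem:tech} and Corollary~\ref{coro:ia} can then be used to control contributions from moderate and large~$k$.
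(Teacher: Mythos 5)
Your overall strategy is the same as the paper's: reduce to showing $\sum_{k\ge 2}\mathbb{P}(F_k)=o(1)$ via the decomposition of Lemmas~\ref{stupid} and~\ref{lem:TunionFk}, with the $k=1$ piece absorbed into $\{\bL=0\}$. The gap is in the key counting step (ii). From the minimality condition in Def.~\ref{def:defK} you only extract that every $u\in I$ has a neighbor in $N(I)$; over a fixed witness pair $(I,N)$ this gives a factor $\bigl(1-(1-p)^{k-1}\bigr)^k\approx\bigl((k-1)p\bigr)^k$, i.e.\ about $p^k$ (and your claim that it yields ``roughly $p^{k-1}$'' would come from the other one-sided condition, that each $v\in N$ touches $I$; either way you get only $k$ or $k-1$ forced incidences). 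That is not enough: with $p=\frac{\log n+c+o(1)}{n}$, the summand $\binom{n}{k}\binom{n-k}{k-1}(1-p)^{\binom{k}{2}+k(n-2k+1)}\bigl((k-1)p\bigr)^k$ is of order $e^{2k}\bigl(\tfrac{\log n}{k}\bigr)^k k\, n^{-1+O(k^2/n)}$, which for $k\asymp\log n$ is polynomially \emph{large} in $n$ (at $k\approx e\log n$ it is about $n^{e-1}$), so the union bound does not sum to $o(1)$. The paper avoids this by drawing a stronger consequence from condition~2 of Def.~\ref{def:defK}: if some $v\in N(I)$ were adjacent to only one $u\in I$, then $I\setminus\{u\}$ would be a smaller independent set violating the condition; hence every $v\in N(I)$ has at least \emph{two} neighbors in $I$, which forces $2(k-1)$ incidences and yields the factor $\bigl(\binom{k}{2}p^2\bigr)^{k-1}\sim p^{2(k-1)}$ in Eq.~\eqref{eq:upboundFk}. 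It is exactly this $p^{2(k-1)}$ (versus your $p^{k}$) that makes every term decay like $\bigl(C(\log n)^2 n^{-0.1}\bigr)^k$ and the sum vanish.

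Your proposed rescues do not close this gap. The self-loop factor (iv) is useless in general: for $q$ bounded away from $0$ the component of $I$ contains a self-loop with probability $1-o(1)$, so it contributes no decay (and the Proposition must hold for all $q$, including $q$ constant); moreover it is not independent of (i)--(iii), so multiplying it in needs care anyway. Lemma~\ref{lem:tech} and Corollary~\ref{coro:ia} also do not constrain $F_k$ for moderate $k$, because $I\cup N(I)$ need not be a union of connected components (vertices of $N(I)$ may have arbitrarily many neighbors outside), so the event $F_k$ is compatible with the graph having one giant component. The fix is simply to replace your (ii) by the ``two neighbors in $I$ per vertex of $N(I)$'' condition and then carry out the asymptotic estimate as in the paper (including the separate treatment of $k=(n+1)/2$ for odd $n$); your $(1-q)^k$ factor in (i) is correct but unnecessary.
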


\begin{proof}
From Theorem \ref{th:resdet} we know that $G$ is stable if and only if it contains a Hamiltonian decomposition and all of its nodes are connected to loops. Since $F_1 \subset \{G \mid \bL({G}) = 0\}$ and 
$$\mathbb{P}(\cT^n) = \mathbb{P}(F_1) + \sum_{k = 2}^{\lceil(n + 1)/2\rceil} \mathbb{P}(F_k),$$
we have:
\begin{align}
\mathbb{P}(\bS({G}) = 0) - \mathbb{P}(\bL({G}) = 0) \leq \sum_{k = 2}^{\lceil(n + 1)/2\rceil} \mathbb{P}(F_k).
\end{align}
Next, we prove that
$$\sum_{k = 2}^{\lceil(n + 1)/2\rceil} \mathbb{P}(F_k) = o(1),$$
which will conclude the proof.

To this end, fix $k > 1$. We denote by $I$ an independent set satisfying the first condition of Def.~\ref{def:defK} for a given graph in $F_k$.   We can choose its $k$ vertices in $\binom{n}{k}$ ways and their $k - 1$ neighbors in $\binom{n - k}{k-1}$ ways. Now, if we assume that a vertex $v \in N(I)$ is adjacent to only {\it one} vertex $u \in I$, then $J = I \setminus u$ will be such that $|J| = k -1$ and $|N(J)| = k -2$; but this is a contradiction with condition 2. Therefore, every vertex $v \in N(I)$ is adjacent to at least {\it two} vertices $u_1, u_2 \in I$.

Putting the above facts together, we  have the following upper bound:
\begin{equation}\label{eq:upboundFk}
\mathbb{P}(F_k) \leq \binom{n}{k} (1 - p)^{\binom{k}{2}} \binom{n - k}{k - 1}(1 - p)^{(n - 2k + 1)k}\left(\binom{k}{2} p^2\right)^{k-1},
\end{equation}
where the term $\binom{n}{k} (1 - p)^{\binom{k}{2}}$ accounts for the choice of $k$ distinct vertices without edges between them, the term $\binom{n - k}{k - 1}(1 - p)^{(n - 2k + 1)k}$ accounts for the choice of their $k-1$ neighbors, and the fact that no nodes outside the $k-1$ selected ones can be connected to $I$. Finally,  the last term accounts for the just-derived condition that $v \in N(I)$ implies that $v$ is the neighbor of at least {\it two} vertices in $I$.

We now bound the right-hand-side of Eq.~\eqref{eq:upboundFk}. Expanding the binomial coefficients and gathering terms of the same power, we get
\begin{equation}\label{eq:p411}  \mathbb{P}(F_k) \leq \frac{n!}{k!(n-k)!}\frac{(n-k)!}{(k-1)!(n-2k+1)!}\frac{k^{k-1}(k-1)^{k-1}}{2^{k-1}}p^{2(k-1)}(1-p)^{(n -1.5k +0.5)k}.\end{equation}

We first consider the terms $F_k$ with $2 \leq k < \lceil \frac{(n+1)}{2}\rceil$ (specifically, if $n$ is odd, the term $F_{\frac{n+1}{2}}$ is omitted, otherwise all terms are considered.) Recall that for all positive integers $k$, Stirling's approximation yields the following inequalities: \begin{equation}\label{eq:stirling}
\sqrt{2\pi}k^{k+\frac{1}{2}}e^{-k} \leq k! \leq e k^{k+\frac{1}{2}}e^{-k}.\end{equation}
Using these in the above expression, we obtain\footnote{By $f(x)\ll g(x)$ it is meant that for $x$ large enough, $f(x) < c g(x)$ for some $c>0$ (Vinogradov symbol).} 
\begin{align*}
    \mathbb{P}(F_k) 
    &\ll \frac{n^{n+0.5}k^{k-1}(k-1)^{k-1}}{k^{k+0.5}(k-1)^{k-0.5}(n-2k + 1)^{n-2k+1.5}2^{k-1}}p^{2(k-1)}(1-p)^{(n-1.5k+0.5)k} \end{align*}
Now, using the following relation $$\frac{k^{k-1}(k-1)^{k-1}}{k^{k+0.5}(k-1)^{k-0.5}} = \frac{1}{k^{1.5}(k-1)^{0.5}} \ll \frac{1}{k^2},$$ we obtain $$
 \mathbb{P}(F_k)\ll \frac{n^{n+0.5}}{k^2(n-2k + 1)^{n-2k+1.5}2^{k-1}}p^{2(k-1)}(1-p)^{(n-1.5k+0.5)k}.
$$
To proceed, we substitute $p = \frac{\log n + c + o(1)}{n}$. We have that $$p^{2(k-1)} = \left(\frac{\log n + c + o(1)}{n}\right)^{2(k-1)} = \frac{(\log n)^{2k-1}}{n^{2k-1}} \frac{(\log n +c + o(1))^{2(k-1)}}{(\log n)^{2(k-1)}}.$$ We let $C>1$ and get:
 \begin{align*}
    \mathbb{P}(F_k)&\ll \frac{n^{n+0.5}}{k^2(n-2k + 1)^{n-2k+1.5}2^{k-1}}\frac{(\log n)^{2(k-1)}}{n^{2(k-1)}}C^{2(k-1)}(1-p)^{(n-1.5k+0.5)k}\\
    &\ll \left(1 + \frac{2k-1}{n-2k + 1}\right)^{n - 2k +1.5}\frac{n(\log n)^{2(k-1)}}{k^22^{k-1}}C^{2(k-1)}(1-p)^{(n-1.5k+0.5)k},
        \end{align*} where we used the fact that $\left(1 + \frac{2k-1}{n-2k + 1}\right)^{n - 2k +1.5}= \left(\frac{n}{n-2k+1}\right)^{n-2k+1.5} $.        
For $2 \leq k < \lceil \frac{(n+1)}{2}\rceil$, a short calculation shows that $\frac{n-2k+1.5}{k} < \frac{10(n-2k+1)}{2k-1}$. Hence

    \begin{align*}\mathbb{P}(F_k)&\ll \left(\left(1 + \frac{2k-1}{n-2k+1}\right)^\frac{10(n-2k+1)}{2k - 1} C^2n^{\frac{1}{k}}(\log n)^2(1-p)^{n - 1.5k + 0.5}   \right)^k, \\
    &\ll \left(C^2n^{\frac{1}{k}}(\log n)^2 (1-p)^{n-1.5k+0.5}\right)^k,
 \end{align*}
\noindent where the last inequality follows from the fact that the function $f(x)=(1+\frac{1}{x})^x$ takes values between $1$ and $e$.

We now expand  $\log(1-p)$ in Taylor series and, recalling that $p= \frac{\log n + c + o(1)}{n}$, we get
\begin{equation*}\label{eq:lntaylor} 
1-p = \exp(\log(1-p)) = \exp\left(-\frac{\log(n)(1+o(1))}{n}\right) = n^{-\frac{1+o(1)}{n}}.
\end{equation*}

Therefore,
\begin{align}\label{eq:PFkforallk}
   \mathbb{P}(F_k) \ll \left(C^2(\log n)^2 n^{-\frac{n-1.5k+0.5}{n}(1 +o(1))+\frac{1}{k}}\right)^k \ll \left(C^2\frac{(\log n)^2}{n^{0.1}}\right)^k,
\end{align}
where the last inequality stems from the fact that $\frac{n-1.5k+0.5}{n}(1 +o(1))+\frac{1}{k}>0.1$ for $2 \leq k < \lceil \frac{n+1}{2} \rceil$.

We now bound the size of $F_k$ for the case $n$  odd and $k = (n + 1)/2$, which is not included in the above analysis (note that we divide  there by $n-2k+1$). The analysis follows the same line as the one above, and we thus provide fewer details. For $n$ is  sufficiently large, we have from Eq.~\eqref{eq:p411}

\begin{align}\label{penev}
    \mathbb{P}(F_k) &\leq \frac{n!}{\left(\frac{n+1}{2}\right)!\left(\frac{n-1}{2}\right)!} (1-p)^{\binom{(n+1)/2}{2}} \binom{\frac{n+1}{2}}{2}^{\frac{n-1}{2}} p^{n-1}  \notag \\
     &\ll \frac{n^{n+\frac{1}{2}}}{(n^2-1)^{\frac{1}{2}}(n+1) 2^{\frac{n-5}{2}}} n^{-\frac{n^2-1}{8n}(1+o(1)) } \left(\frac{C^{\frac{1}{2}}\log n}{n}\right)^{n-1} \notag \\
    &\ll \left(n^{2-\frac{1}{n+1}} n^{-\frac{1}{4}\left(1-\frac{1}{n}\right)(1+o(1))}\left(\frac{C^{\frac{1}{2}}\log n}{n}\right)^{2-\frac{4}{n+1}}\right)^{\frac{n+1}{2}}   \notag  \\
    &\ll \left(C\frac{(\log n)^2}{n^{0.25 + o(1)}}\right)^k \ll \left(C\frac{(\log n)^2}{n^{0.1}}\right)^k,   
\end{align}
where $C = \left(1+\frac{1+c}{\log 2}\right)^2$.

Combining \eqref{eq:PFkforallk} and \eqref{penev}, we see that $\sum_{k = 2}^{[(n+1)/2]}\mathbb{P}(F_k) = o(1)$, which concludes the proof.
\end{proof}

Recall that having a component without a self-loop implies instability. Thus, the previous result says that the transition to instability takes place shortly before the transition to having graphs with multiple components, not all of them having a self-loop. We now evaluate this latter probability:

\begin{Proposition}\label{prop:mid}
Let $p= \frac{\log(n) + c + o(1)}{n}$ and $\lambda = e^{-c}$. The probability that all components of the graph $G=\mathcal{G}^n_{p,q}$ contain a node with a self-loop is equal to $$\mathbb{P}(\bL(G) = 0) = \left\lbrace \begin{aligned} 
e^{-\lambda} -e^{-\mu-\lambda} + o(1) & \mbox{ for } q = \frac{\mu+o(1)}{n}, \quad 0 \geq \mu \\
e^{-\lambda(1-\mu)} + o(1) & \mbox{ for } q= \mu +o(1), \quad 0< \mu < 1
\end{aligned} \right.  $$
\end{Proposition}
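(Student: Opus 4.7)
The strategy is to exploit the structural description of $\mathcal{G}^n_{p,q}$ afforded by Corollary~\ref{coro:ia} and Proposition~\ref{prop:bollo}: with probability $1-o(1)$, the graph consists of $k$ isolated vertices together with a single connected component on the remaining $n-k$ vertices, and $k$ converges in distribution to a Poisson random variable with parameter $\lambda=e^{-c}$. The fact specific to model A that I would use next is that self-loops at each vertex are sampled \emph{independently} of the off-diagonal edges, so conditioning on the component partition produced by the non-loop edges leaves the self-loop pattern as independent Bernoulli$(q)$ variables at each vertex.

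Under this conditioning, the event that every component carries at least one self-loop factors into independent pieces: each of the $k$ isolated vertices must carry a self-loop (probability $q$ each), and the large component must contain at least one self-loop among its $n-k$ vertices (probability $1-(1-q)^{n-k}$). Writing $p_k = e^{-\lambda}\lambda^k/k!$ for the limiting Poisson weights, the proof reduces to evaluating
\[
\sum_{k=0}^{\infty} p_k\, q^k\bigl(1-(1-q)^{n-k}\bigr) + o(1).
\]

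The two cases of the proposition correspond to the two asymptotic regimes of $q$. When $q=\mu/n + o(1/n)$ with $\mu\geq 0$, one has $q^k\to 0$ for every $k\geq 1$ while $(1-q)^{n-k}\to e^{-\mu}$ for fixed $k$; only the $k=0$ term survives and the sum collapses to $e^{-\lambda}(1-e^{-\mu})=e^{-\lambda}-e^{-\lambda-\mu}$. When $q=\mu + o(1)$ with $\mu\in(0,1)$, the factor $(1-q)^{n-k}$ decays geometrically fast while $q^k\to\mu^k$, and summing the exponential series yields $e^{-\lambda}\sum_{k\geq 0}(\lambda\mu)^k/k!=e^{-\lambda(1-\mu)}$.

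The main technical obstacle is to justify the interchange of the $n\to\infty$ limit with the (a priori infinite) sum over $k$. A natural approach is to truncate the sum at some $K=K(n)\to\infty$ sufficiently slowly, use Poisson tail bounds to show that the contribution from $k>K$ is $o(1)$ uniformly in $q\in[0,1]$, and invoke Proposition~\ref{prop:bollo} term by term for $k\leq K$. Finally, the $o(1)$ probability of the atypical event where Corollary~\ref{coro:ia} fails can only perturb the total by an additive $o(1)$, independently of the self-loop realization, so it is absorbed into the final error term.
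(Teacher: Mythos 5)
Your proposal is correct and follows essentially the same route as the paper: condition on the typical structure ($k$ isolated vertices plus one large component, via Corollary~\ref{coro:ia}), use the independence of loops from non-loop edges to get the conditional probability $q^k\bigl(1-(1-q)^{n-k}\bigr)$, and sum against the limiting Poisson$(\lambda)$ weights, treating the two regimes of $q$ exactly as the paper does (your explicit truncation argument for interchanging the limit with the sum is, if anything, more careful than the paper's). Note only that you compute $\mathbb{P}(\bL(G)=1)$, which is what the displayed formulas actually represent; the statement's left-hand side $\mathbb{P}(\bL(G)=0)$ is a typo in the paper, and its own proof likewise evaluates $\mathbb{P}(\bL(G)=1)$.
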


\begin{proof} 
Recall that ${\cal J}^{n,k}$ is the event that the graph $G$ has $k$ isolated vertices and a component of size $n-k$.  Conditioned on  ${\cal J}^{n,k}$,  the probability that at least one component has no self-loop is equal to 
\begin{align}
\mathbb{P}(\bL(G = 0) \mid  {\cal J}^{n, k}) =  1 - q^k + q^k (1-q)^{n-k}.
\end{align}

We have \begin{equation}\mathbb{P}((\bL(G) = 0) \cap  {\cal J}^{n,k})= \mathbb{P}((\bL(G) = 0) \mid   {\cal J}^{n,k}) \mathbb{P}( {\cal J}^{n,k})= (1 - q^k + q^k (1-q)^{n-k}) \mathbb{P}( {\cal J}^{n,k}).\end{equation}

Now, recall that $\mathcal{I}^{n,k}$  is the event that $G$ has $k$ isolated vertices. From Prop.~\ref{prop:bollo}, we know that $\bP(\mathcal{I}^{n-k})$ converges to a Poisson distribution with parameter $\lambda$.  From Cor.~\ref{coro:ia}, we have that $\bP({\cal I}^{n,k} - {\cal J}^{n,k})=o(1)$, and thus

\begin{align}
    \mathbb{P}(\bL(G) = 0) &=  \sum_{k=0}^{n}\mathbb{P}(\mathcal{I}^{n, k}) - \sum_{k=0}^{n}\mathbb{P}(\mathcal{I}^{n, k}) q^k + (1-q)^n\sum_{k=0}^{n}\mathbb{P}(\mathcal{I}^{n, k})\left(\frac{q}{1-q}\right)^k +o(1) \\
    &= 1 - \sum_{k=0}^{n}\frac{(\lambda q)^k e^{-\lambda}}{k!} + (1-q)^n\sum_{k=0}^{n} \frac{\left(\frac{\lambda q}{1-q}\right)^k e^{-\lambda}}{k!} + o(1) \\
    &= 1 - e^{-\lambda (1-q)}\mathcal{P}_{\lambda q}(X \leq n) + (1-q)^n e^{\frac{\lambda q}{1-q}-\lambda} \mathcal{P}_{\frac{\lambda q}{1-q}}(X \leq n) + o(1),
\end{align}
where $\mathcal{P}_{\lambda}(X \leq n)$ denotes the cumulative distribution function of a Poisson distribution with parameter $\lambda$.

We now analyze two  asymptotic regimes for $q=q(n)$. Note that when the parameter $\lambda(n)$ of a Poisson distribution  is upper-bounded, then its cumulative distribution $ \mathcal{P}_{\lambda(n)}(X \leq n)$ converges to one as $n \to \infty.$
\begin{description}
\item[(a) $q = \mu + o(1) = \frac{\omega(1)}{n}$:] in this case, $q$ converges to a constant $0< \mu< 1$ slower than $\beta/n$, for any $\beta >0$. We have that  $(1 - q)^n = o(1)$, $\frac{\lambda q}{1-q} = O(1)$, and
\begin{align*}
 \mathbb{P}(\bL(G) = 1) = e^{-\lambda(1-\alpha)} + o(1).
 \end{align*}

\item[(b) $q = \frac{\mu + o(1)}{n}$:] in this case, $q$ converges to 0 as $\mu/n$, for $\mu \geq 0$. Using the Taylor expansion of $\log(1-q)$, we have
 \begin{align*}
 \mathbb{P}(\bL(G) = 1) &= e^{-\lambda} - e^{n\log (1-q) + \frac{\lambda q}{1-q} - \lambda} \mathcal{P}_{\frac{\lambda q}{1-q}}(X \leq n) + o(1) \\
 &= e^{-\lambda} - e^{-n(\frac{\mu}{n}+ o(\frac{1}{n})) + \frac{\lambda(\mu + o(1))}{n - \mu + o(1)} - \lambda}\mathcal{P}_{o(1)}(X \leq n) \\
 &= e^{-\lambda} + e^{-\mu -\lambda + o(1)}(1 - o(1)) \\
 &= e^{-\lambda} - e^{-\mu - \lambda} + o(1).
 \end{align*}
 \end{description}
\end{proof}

We now summarize our results and characterize the structural stability of random graphs as a function of $p$ and $q$:

\begin{Theorem}\label{theoremtable1}
 Consider the random graphs $G=\mathcal{G}^n_{p,q}$ and let $\lambda = e^{-c}$ for a parameter $c$. Then, the probability that $G$ is structurally stable is given by:
 \renewcommand{\arraystretch}{1.5}
 \begin{center}
 \begin{tabular}{||c| c c c||} 
 \hline
 \diagbox[width=7em]{$q(n)$}{$p(n)$} &$ \frac{\log(n) - \omega(1)}{n}$ &$\frac{\log(n) + c + o(1)}{n}$ &  $\frac{\log(n) + \omega(1)}{n}$ \\ [0.5ex] 
 \hline\hline
 $\frac{\mu + o(1)}{n} \geq 0$ & $o(1)$ & $e^{-\lambda}(1 - e^{- \mu}) + o(1)$ & $1 - e^{-\mu} + o(1)$\\
 \hline
  $0< \mu + o(1)=\omega(1/n)$ & $o(1)$ for $\mu < 1$ & $e^{-\lambda(1-\mu)} + o(1)$ & $1 - o(1)$ \\
 \hline
\end{tabular}
\end{center} 

\end{Theorem}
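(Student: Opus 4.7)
The plan is to treat the central column of the table, i.e.\ the critical window $p=(\log n+c+o(1))/n$, directly from Propositions~\ref{prop:unioA} and~\ref{prop:mid}, and to recover the left and right columns by combining the monotonicity of $\bS$ (Theorem~\ref{ivanpenev}) with a direct computation of $\mathbb{P}(\bL(G)=1)$. In the central column, Proposition~\ref{prop:unioA} gives $\mathbb{P}(\bS(G)=1)=\mathbb{P}(\bL(G)=1)+o(1)$, and Proposition~\ref{prop:mid} evaluates the right-hand side: substituting $q=(\mu+o(1))/n$ yields $e^{-\lambda}-e^{-\mu-\lambda}+o(1)=e^{-\lambda}(1-e^{-\mu})+o(1)$, and substituting $q=\mu+o(1)\in(0,1)$ yields $e^{-\lambda(1-\mu)}+o(1)$, recovering the two central entries.

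For the supercritical column $p=(\log n+\omega(1))/n$ I would sandwich $\mathbb{P}(\bS(G)=1)$. The implication $\bS\Rightarrow\bL$ gives the upper bound $\mathbb{P}(\bS(G)=1)\le\mathbb{P}(\bL(G)=1)$. Since $p$ is above the connectivity threshold, $G$ is connected with high probability (this is the classical Erd\H{o}s--R\'enyi statement, and can also be extracted from Proposition~\ref{prop:bollo} letting $c\to\infty$ combined with Lemma~\ref{lem:tech}), so $\mathbb{P}(\bL(G)=1)=1-(1-q)^n+o(1)$, which tends to $1-e^{-\mu}$ when $q=(\mu+o(1))/n$ and to $1$ when $q=\mu+o(1)\in(0,1)$. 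For the matching lower bound, Theorem~\ref{ivanpenev} yields, for every fixed $c$ and $n$ large enough, $\mathbb{P}(\bS(\mathcal{G}^n_{p,q})=1)\ge e^{-e^{-c}}(1-e^{-\mu})+o_n(1)$ by comparison with the central column; taking $\liminf_n$ and then letting $c\to\infty$ drives the bound to $1-e^{-\mu}$ (resp.\ $1$), closing the sandwich.

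The subcritical column $p=(\log n-\omega(1))/n$ follows from the reverse monotonicity inequality: for every fixed $c$, $p_{\text{sub}}(n)\le(\log n+c)/n$ eventually, so $\mathbb{P}(\bS(G)=1)\le e^{-e^{-c}}(1-e^{-\mu})+o_n(1)$ (or the row-two analogue $e^{-e^{-c}(1-\mu)}+o_n(1)$); taking $\limsup_n$ and then $c\to-\infty$ makes the bound vanish, giving $o(1)$. Alternatively, one may argue directly that the number of isolated vertices diverges in this regime, and the probability $q^k$ that every isolated vertex carries a self-loop already tends to zero whenever $q=O(1/n)$ or $q\le\mu<1$.

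The main obstacle is that Proposition~\ref{prop:unioA} is established only inside the critical window, so the sub- and supercritical arguments cannot simply reduce to $\bL$; they require the monotonicity sandwich plus the separate computation of $\mathbb{P}(\bL(G)=1)$ under high-probability connectivity. The delicate point is the iterated limit: one has to choose $c=c(\varepsilon)$ first and $n=n(\varepsilon,c)$ second so that the $o(1)$ errors in the monotonicity comparison are absorbed uniformly before taking $c$ to its extreme value.
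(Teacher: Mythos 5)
Your proposal is correct and follows essentially the same route as the paper: the central column comes from Propositions~\ref{prop:unioA} and~\ref{prop:mid}, and the outer columns from the monotonicity of $\bS$ (Theorem~\ref{ivanpenev}) together with taking $c\to\pm\infty$, with the same iterated-limit bookkeeping. Your explicit sandwich for the supercritical column, via $\mathbb{P}(\bS(G)=1)\le\mathbb{P}(\bL(G)=1)\le 1-(1-q)^n$ under high-probability connectivity, is in fact slightly more careful than the paper's one-sided monotonicity argument, which by itself only delivers the lower bound $1-e^{-\mu}$ in the first row of that column.
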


\begin{proof}
The entries of the column with $p =\frac{\log(n) + c + o(1)}{n} $ were derived in Prop.~\ref{prop:mid}. The entries of the left and right columns are obtained using simple monotonicity argument. For the left column, note that for all $c$,  $\frac{\log(n) - \omega(1)}{n} \ll \frac{\log(n) + c + o(1)}{n}$. Hence, for $q(n)$ fixed,

$$\bP\left(G \mbox{ is stable} \mid p=\frac{\log(n) - \omega(1)}{n}\right) \leq  \bP\left(G \mbox{ is stable} \mid p=\frac{\log(n) + c + o(1)}{n}\right)$$ 
holds asymptotically for all $c$. Taking $c$ arbitrarily large and negative, $\lambda$ becomes arbitrarily large and positive, and the previous inequality establishes the left column. For the right column, we have similarly that, asymptotically, $$\bP\left(G \mbox{ is stable} \mid p=\frac{\log(n) + \omega(1)}{n}\right) \geq  \bP\left(G \mbox{ is stable} \mid p=\frac{\log(n) + c + o(1)}{n}\right).$$ Taking $c$ arbitrarily large positive yields $\lambda$ converging to zero and we obtain the right column. 
\end{proof}

\begin{figure}
\centering
\includegraphics{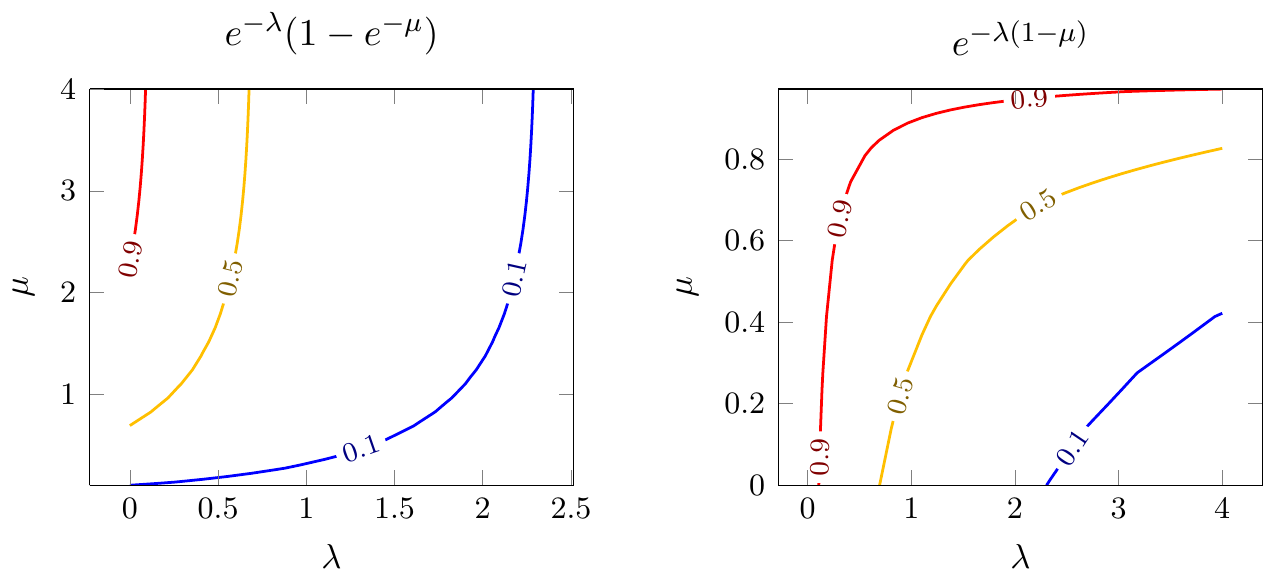}
%
%
%
%
%

\caption{The curves represent isolevel sets of the probability that a graph $G={\cal G}^n_{p,q}$ is stable in terms of the parameter $\lambda$ and $\mu$, when $p(n) =\frac{\log(n) + c + o(1)}{n}$ and $q(n)=\mu+o(1)=\omega(1/n)$ (left), $q(n)=o(1)$ (right). } 
\end{figure}

\subsection{Model B}
We now derive the probability that a random graph sampled from model B is stable, where we recall that model B has two parameters: $N$, the number of edges between distinct nodes, and $M$, the number of loops.

The first step, as in the derivation for model A, is to establish the number of independent components in a random graph sampled from model B. To this end, we have the following result (see Prop.~\ref{prop:bollo}):
\begin{Proposition}[Erdos-Renyi]\label{prop:numisomodelB}
Let $G=\G^n_{N,0}$ with $N=\frac{n}{2} (\log(n)+c+o(1))$, $c \in \R$. Then, the number of isolated vertices in $G$ converges  in distribution to a Poisson distribution with parameter $\lambda=e^{-c}$.
\end{Proposition}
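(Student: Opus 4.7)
The plan is to establish Poisson convergence via the method of factorial moments, mirroring the approach used for model A (Prop.~\ref{prop:bollo}). Let $X_n$ denote the number of isolated vertices in $G=\G^n_{N,0}$ and write $X_n = \sum_{i=1}^n \mathbf{1}\{i \text{ isolated}\}$. The standard moment criterion says that $X_n$ converges in distribution to Poisson$(\lambda)$ provided that the $k$-th factorial moment $\mathbb{E}[X_n(X_n-1)\cdots(X_n-k+1)]$ converges to $\lambda^k$ for every fixed $k\geq 1$, so the task reduces to verifying this moment convergence.

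By linearity and vertex symmetry, the factorial moment equals $\binom{n}{k} k!$ times the probability that a designated $k$-set of vertices is simultaneously isolated. Under model B this is a purely combinatorial ratio: all $N$ edges must land among the $\binom{n-k}{2}$ pairs avoiding those $k$ vertices, so
$$\mathbb{P}(1,\ldots,k \text{ isolated}) = \frac{\binom{\binom{n-k}{2}}{N}}{\binom{\binom{n}{2}}{N}} = \prod_{j=0}^{N-1}\frac{\binom{n-k}{2}-j}{\binom{n}{2}-j}.$$
Using the identity $\binom{n}{2}-\binom{n-k}{2} = kn - \binom{k+1}{2}$ and Taylor-expanding the logarithm, I would show the log of this product equals $-kn\sum_{j=0}^{N-1}(\binom{n}{2}-j)^{-1} + o(1)$. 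The Riemann-sum estimate $\sum_{j=0}^{N-1}(\binom{n}{2}-j)^{-1} = \log(\binom{n}{2}/(\binom{n}{2}-N)) + o(1)$, combined with the substitution $N=\frac{n}{2}(\log n + c + o(1))$, yields a value of $-k(\log n + c) + o(1)$. Thus the isolation probability is $\lambda^k n^{-k}(1+o(1))$, and multiplying by $\binom{n}{k}k! \sim n^k$ gives $\mathbb{E}[(X_n)_k] \to \lambda^k$ as required.

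The main technical point is to confirm that the remainders of the Taylor expansions, namely the $O((\log n/n)^2)$ quadratic terms summed over $N = O(n\log n)$ values of $j$, together with the subleading $\binom{k+1}{2}$ correction, are genuinely $o(1)$; this is straightforward because $N/\binom{n}{2}\to 0$ and $k$ stays fixed. An alternative route would be to transfer Prop.~\ref{prop:bollo} directly by conditioning $\G^n_{p,0}$ with $p \approx 2N/n^2$ on the event $\{|E|=N\}$, exploiting the asymptotic equivalence of the two Erd\H{o}s--R\'enyi models; however, making this precise for convergence in distribution (as opposed to for monotone zero-one events) requires a local limit theorem for the binomial $|E|$, which is more delicate than the direct factorial-moment computation sketched above.
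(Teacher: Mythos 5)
Your proposal is correct, but it is worth noting that the paper itself supplies no proof of this statement: it simply cites the classical Erd\H{o}s--R\'enyi result (\cite[Theorem 3]{connectrenyi}), so your factorial-moment computation is a self-contained substitute for that citation, and it is essentially the standard argument behind the cited theorem. The key steps all check out: the $k$-th factorial moment is $(n)_k$ times the probability that a fixed $k$-set is isolated; under model B with $M=0$ this probability is exactly the hypergeometric-type ratio $\binom{\binom{n-k}{2}}{N}/\binom{\binom{n}{2}}{N}=\prod_{j=0}^{N-1}\bigl(\binom{n-k}{2}-j\bigr)/\bigl(\binom{n}{2}-j\bigr)$, the identity $\binom{n}{2}-\binom{n-k}{2}=kn-\binom{k+1}{2}$ is right, and with $N=\tfrac n2(\log n+c+o(1))$ the logarithm of the product is $-k(\log n+c)+o(1)$, giving $\E[(X_n)_k]\to\lambda^k$ and hence Poisson convergence by the factorial-moment criterion. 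One small precision point: as written, the intermediate claim that $\sum_{j=0}^{N-1}\bigl(\binom{n}{2}-j\bigr)^{-1}$ equals $\log\bigl(\binom{n}{2}/(\binom{n}{2}-N)\bigr)+o(1)$ is formally too weak, since this sum is multiplied by $kn$; you need (and in fact have) an error of order $o(1/n)$ --- the Riemann-sum discrepancy is $O\bigl(N/(\binom{n}{2}-N)^2\bigr)=O(\log n/n^3)$ and the Taylor remainders contribute $O(\log n/n)$ before any multiplication by $n$ occurs, so the conclusion stands; just state the error at the scale actually required. Your closing remark is also apt: transferring Prop.~\ref{prop:bollo} from model A by conditioning on $|E|=N$ would indeed require a local limit theorem for the binomial edge count, since convergence in distribution is not a monotone event, so the direct moment computation is the cleaner route and has the side benefit of making the paper's treatment of model B independent of the external reference.
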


Proposition \ref{prop:numisomodelB} and its proof can be found in \cite[Theorem 3]{connectrenyi}. The following Lemma is the equivalent of Lemma~\ref{lem:tech}:
\begin{restatable}[]{Lemma}{lemtechtwo}:
\label{lemma:tech2}
Let $N= \frac{n}{2}(\log(n) + c +o(1))$ with $c \in \R$. The probability that $G = \G^n_{N,M}$ has a connected component of size strictly larger than $1$  and smaller or equal to $n/2$ is $o(1)$.
\end{restatable}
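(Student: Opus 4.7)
The plan is to mirror the proof of Lemma~\ref{lem:tech} almost verbatim, replacing the binomial edge probabilities of model A with the hypergeometric edge probabilities of model B and checking that they agree up to a $(1+o(1))$ factor. Since the loop count $M$ plays no role in the connectivity structure on non-loop edges, only $N$ enters the bound. Let $\bP(k,n,N)$ denote the probability that $G=\G^n_{N,M}$ has a connected component of size exactly $k$. Applying a union bound first over the $\binom{n}{k}$ choices of vertex set $S\subseteq V$ with $|S|=k$, and then over the $k^{k-2}$ spanning trees of $S$, gives
$$\bP(k,n,N)\;\le\;\binom{n}{k}\,k^{k-2}\,\cdot\,q(k,n,N),$$
where $q(k,n,N)$ is the probability that a fixed set of $k-1$ tree edges is contained in $E$ and none of the $k(n-k)$ edges between $S$ and $V\setminus S$ is in $E$.

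Because $\G^n_{N,M}$ selects $N$ non-loop edges uniformly at random from the $\binom{n}{2}$ possible ones, $q(k,n,N)$ is the hypergeometric probability
$$q(k,n,N)\;=\;\frac{\binom{\binom{n}{2}-k(n-k)-(k-1)}{N-(k-1)}}{\binom{\binom{n}{2}}{N}}.$$
With $p:=N/\binom{n}{2}=(\log n+c+o(1))/(n-1)$, a standard term-by-term expansion of ratios of falling factorials yields
$$q(k,n,N)\;\le\;p^{k-1}(1-p)^{k(n-k)}\bigl(1+o(1)\bigr),$$
uniformly in $k\in\{2,\dots,\lfloor n/2\rfloor\}$. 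Substituting this bound into the union bound above reproduces exactly the starting inequality
$$\bP(k,n,N)\;\le\;\binom{n}{k}\,k^{k-2}\,p^{k-1}(1-p)^{k(n-k)}$$
that opens the proof of Lemma~\ref{lem:tech}.

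From here the argument is identical to that of Lemma~\ref{lem:tech}: apply Stirling's inequalities~\eqref{eq:stirling}, substitute $p=(\log n+c+o(1))/n$, and use the Taylor expansion $\log(1-p)=-\log(n)(1+o(1))/n$ to obtain, for $5\le k<\lceil(n+1)/2\rceil$, a bound of the form $\bigl(C(\log n)^2/n^{0.1}\bigr)^k$; the boundary cases $k\in\{2,3,4\}$ and (if $n$ is odd) $k=(n+1)/2$ are handled by direct estimates as in the original proof. Summing the geometric series then gives $\sum_{k=2}^{\lfloor n/2\rfloor}\bP(k,n,N)=o(1)$, which is precisely the statement of the lemma.

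The only genuinely new technical step is justifying the passage from the hypergeometric probability $q(k,n,N)$ to its binomial approximation $p^{k-1}(1-p)^{k(n-k)}$ uniformly in $k$. For $k$ bounded away from $n/2$, both the ``removed'' number of positions $k(n-k)+(k-1)$ and the ``removed'' number of draws $k-1$ are small compared with $\binom{n}{2}-N$, so the standard inequality $\prod_{i=0}^{a-1}\frac{M-N-i}{M-i}\le (1-N/M)^a\,(1+O(a^2/(M-N)))$ applies cleanly. For $k$ near $n/2$, where $a=k(n-k)$ is of order $n^2/4$, one must be slightly more careful, but here the factor $(1-p)^{k(n-k)}$ is already of order $n^{-\Omega(n)}$, which easily absorbs any polynomial discrepancy; this exactly parallels why the large-$k$ range is the easiest to control in Lemma~\ref{lem:tech}. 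Once this uniform comparison is in hand, no further calculation is needed.
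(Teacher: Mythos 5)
Your route is genuinely different from the paper's: the paper never compares model B to model A, but instead bounds $\bP(k,N,M)$ directly by the count $\binom{n}{k}k^{k-2}\binom{L}{N-k+1}/\binom{\binom{n}{2}}{N}$ with $L=\binom{n}{2}-k(n-k)-(k-1)$ (the same quantity as your hypergeometric numerator) and then estimates the ratio of factorials term by term, arriving at $\bP(k,N,M)\le\exp\bigl(-(k-1-\tfrac{k^2}{n})(\log n)(1+o(1))\bigr)$, which is summed over $k$. Your idea of reducing to the already-proved Lemma~\ref{lem:tech} by a hypergeometric-versus-binomial comparison is a legitimate alternative, and your union bound and the identification of the relevant hypergeometric probability $q(k,n,N)$ are correct.

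The gap is in the one step you flag as new: the claimed uniform estimate $q(k,n,N)\le p^{k-1}(1-p)^{k(n-k)}(1+o(1))$ is false for $k$ of order $n$, and your fallback description of the error as ``polynomial'' is also wrong. Writing $m=\binom{n}{2}$, $a=k-1$, $b=k(n-k)$, conditioning on the $a$ tree edges being present gives
\begin{equation}
q(k,n,N)=\prod_{i=0}^{a-1}\frac{N-i}{m-i}\;\prod_{j=0}^{b-1}\Bigl(1-\frac{N-a}{m-a-j}\Bigr)\le p^{a}\Bigl((1-p)\tfrac{m}{m-a}\Bigr)^{b}\le p^{a}(1-p)^{b}\exp\Bigl(\tfrac{ab}{m-a}\Bigr),
\end{equation}
and the interaction term $ab/(m-a)\asymp k^{2}/n$ is of order $n$ when $k\asymp n/2$, so the multiplicative discrepancy can be $e^{\Theta(n)}$, not $1+o(1)$. (Your quoted product inequality $\prod_{i}(M-N-i)/(M-i)\le(1-N/M)^{a}$ in fact needs no correction at all; the trouble is the coupling between the ``present'' tree edges and the ``absent'' cut edges, which your write-up does not treat.) The argument is repairable: since $ab/(m-a)=O(k^{2}/n)=O(k)$ for $k\le n/2$, the correct comparison $q\le p^{k-1}(1-p)^{k(n-k)}e^{O(k)}$ only multiplies the base of the $k$-th term in the Lemma~\ref{lem:tech} estimate by a constant, and the sum is still $o(1)$; but you must state and use this weaker bound (or estimate the ratio directly, as the paper does), because the $(1+o(1))$ claim on which your reduction rests does not hold uniformly in $k$.
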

The proof follows the same general approach as the proof of Lemma~\ref{lem:tech}, but with rather different specifics.

\begin{proof}[Proof of Lemma~\ref{lemma:tech2}] Denote by $\bP(k,N,M)$ the probability that $G \in \G^n_{N,M}$ has a connected component of size $k$. Recall that a subset of cardinality $k$ has a connected component only if  it contains a spanning tree, and that the number of such spanning trees is $k^{k-2}$.  Using this fact, we can establish the following bound on the number of assignments of $N$ edges in a graph with $n$ nodes that yield a connected component of size $k$:

\begin{align*}\bP(k,N,M) &\leq \frac{1}{{n(n-1)/2 \choose N}}{n \choose k} k^{k-2}  {L  \choose N-k+1 }\\
& \leq \frac{N!(\bar{n}-N)!}{\bar{n}!} \frac{n!k^{k-2}}{k!(n-k)!}\frac{L!}{(N-k+1)!(L-N+k-1)!},
\end{align*} where $$L={k \choose 2} -k+1+ {n-k \choose 2}=k(k-1)+\bar n-nk+1,$$ and   $\bar{n} = \binom{n}{2} = n(n-1)/2$. To see this, we count the number of assignments of $N$ edges in a graph with $n$ nodes that 
result in at least one connected component of size $k$. Indeed, the first term is the number of all possible assignments of $N$ edges in a graph with $n$ nodes, the second term is the number of subsets of cardinality $k$, the third term is the number of spanning trees in such a subset, and the last term counts the possible position of the remaining edges.

Note that we can rewrite $L$ as \begin{equation}\label{eq:defL}L-N+k-1 = \bar n - N +k(k-1)-nk+1-N+k-1= \bar n - N-k(n-k).\end{equation}

We recall that $N = n(\log n + c + o(1))/2$ and establish some inequalities that hold for $2 \leq k \leq \lceil n/2 \rceil$.
\begin{enumerate}
    \item \begin{align*}\frac{N!}{(N-k+1)!} &\leq N^{k-1} =(n(\log n + c + o(1))/2)^{k-1}\\ &=\exp((k - 1) (\log n)(1 + o(1))),\end{align*}
    where we used the fact that $n\log n = n^{1+o(1)}.$
    
    \item $\frac{n!k^{k-2}}{k!(n-k)!} \leq \frac{n^k k^{k-2} e^k}{k^{k+\frac{1}{2}}} \leq (ne)^k \leq \exp(k (\log n)(1+o(1))).$

    \item 
    \begin{align*}\frac{(\bar n -N)!}{(L-N+k-1)!}\frac{L!}{\bar n!} &= \frac{(\bar n - N)\cdots(\bar n - N - k(n-k) + 1)}{\bar n\cdots(\bar n -k(n-k) -k  + 2)}.
    \end{align*}
    After pairing the terms in the numerator with the first $k-1$ terms in the denominator, we get
    \begin{small}
    \begin{align*}
    \frac{(\bar n -N)!}{(L-N+k-1)!}\frac{L!}{\bar n!}  &\leq  \frac{(\bar n - N)\cdots(\bar n - N - k(n-k) + 1)}{\bar n \cdots(\bar n - k(n-k) + 1)(\bar n - k(n-k)) \cdots(\bar n -k(n-k) -k  + 2)}  \\
    &= \prod_{i = 0}^{k(n-k)-1} \left(\frac{\bar n - N - i}{\bar n - i}\right)\prod_{i = 0}^{k - 2}\frac{1}{\bar n - k(n - k) - i}.
    \end{align*}
    \end{small}
    Note that in the first product, the term corresponding to $i=0$ is the largest, and in the second product, the term corresponding to $i = k-2$ is the largest. Therefore,
    \begin{small}
    \begin{align*}
        \frac{(\bar n -N)!}{(L-N+k-1)!}\frac{L!}{\bar n!}
 &\leq \left(\frac{\bar n - N}{\bar n}\right)^{k(n-k)} \left(\frac{1}{\bar n - k(n-k) -k + 2}\right)^{k-1} \\
    &= \exp\left(-k(n-k)\frac{\log n}{n}(1+o(1)) - (k-1)(\log n)(2 + o(1))\right) \\
    &\leq \exp\left((2 - 3k + \frac{k^2}{n})(\log n)(1 + o(1))\right).
    \end{align*}
    \end{small}

\end{enumerate}
Putting the above inequalities together, we obtain

\begin{equation}\label{lemdd:int1}
    \bP(k,N,M) \leq \exp\left(-\left(k - 1 - \frac{k^2}{n}\right)(\log n)(1 + o(1))\right).
\end{equation}

When $k = 2$, we see that $\mathbb{P}(k, N, M) = o(1)$. When $k \geq 3$, we see that $\mathbb{P}(k, N, M) = o(n^{-1.5})$. Therefore,
$$
\sum_{k = 2}^{\lceil\frac{n}{2}\rceil} \mathbb{P}(k, N, M) = o(1).
$$
\end{proof}

The Proposition below says that the transitions to stability and  to having a graph with a component without a self-loop take place at almost the same time asymptotically. To show it, we follow a general approach first introduced in~\cite{erdos59}:

\begin{Proposition}
For $N, M$ given rates, the probability that $G=\mathcal{G}^n_{N, M}$ is unstable is asymptotically equal to the probability that $G$ contains a component without a self-loop, i.e.
\begin{align}
    \mathbb{P}(\bS(G) = 0) = \mathbb{P}(\bL(G) = 0) + o(1).
\end{align}
\end{Proposition}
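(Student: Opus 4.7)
The plan is to follow the proof of Proposition~\ref{prop:unioA} line by line, replacing the independent-edge probability estimates with their Model B analogues. The argument breaks into three steps.

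First, as in Prop.~\ref{prop:unioA}, Theorem~\ref{th:resdet}, Lemma~\ref{lem:TunionFk}, and the containment $F_1 \subseteq \{\bL(G) = 0\}$ (since a vertex of $F_1$ is isolated and has no self-loop) together give
$$\mathbb{P}(\bS(G) = 0) - \mathbb{P}(\bL(G) = 0) \;\leq\; \sum_{k=2}^{\lceil(n+1)/2\rceil}\mathbb{P}(F_k),$$
where the probability is now taken with respect to $\mathcal{G}^n_{N,M}$. It thus suffices to show the right-hand side is $o(1)$.

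Second, the combinatorial set-up of Prop.~\ref{prop:unioA}---choosing $I$ with $|I|=k$ ($\binom{n}{k}$ ways), its $k-1$ neighbors ($\binom{n-k}{k-1}$ ways), and a designated pair of neighbors in $I$ for each vertex of $N(I)$ ($\binom{k}{2}^{k-1}$ ways, arising from the fact proved there that every vertex of $N(I)$ has at least two neighbors in $I$)---carries over verbatim. In Model A, the configuration with $r := 2(k-1)$ required edges and $f := \binom{k}{2}+k(n-2k+1)$ forbidden non-edges has probability $p^r(1-p)^f$. In Model B the analogous probability is the hypergeometric value
$$\frac{\binom{\bar n - r - f}{\,N - r\,}}{\binom{\bar n}{N}}, \qquad \bar n := \binom{n}{2}.$$
Setting $p := N/\bar n = (\log n + c + o(1))/(n-1)$, a direct factorization of this ratio produces a bound of the same form as the Model A factor, up to a controllable multiplicative error (see below). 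Substituting into the union bound puts $\mathbb{P}(F_k)$ in the form of Eq.~\eqref{eq:upboundFk}.

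Third, the algebraic simplifications of Prop.~\ref{prop:unioA}---Stirling's formula, the Taylor expansion of $\log(1-p)$, and the separate treatment of $k=(n+1)/2$ when $n$ is odd (cf.\ Eq.~\eqref{penev})---then proceed essentially verbatim, yielding $\mathbb{P}(F_k) \ll \bigl(C(\log n)^2/n^{0.1}\bigr)^k$ for $2 \leq k \leq \lceil(n+1)/2\rceil$, whose geometric sum is $o(1)$.

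The main obstacle is controlling the hypergeometric-to-binomial comparison uniformly in $k$, particularly in the regime $k \sim n/2$ where $f = \Theta(n^2)$ becomes a constant fraction of $\bar n$ and the naive approximation $\binom{\bar n - r - f}{N-r}/\binom{\bar n}{N} \approx p^r(1-p)^f$ loses precision. The remedy is the explicit factorization
$$\frac{\binom{\bar n - r - f}{N-r}}{\binom{\bar n}{N}} = \prod_{j=0}^{r-1}\frac{N-j}{\bar n - j}\;\prod_{j=0}^{f-1}\frac{\bar n - N - j}{\bar n - r - j},$$
in which each factor of the second product is monotone decreasing in $j$, so the product is at most $\bigl((\bar n - N)/(\bar n - r)\bigr)^f \leq (1-p)^f(1 + O(r/\bar n))^f$. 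Since $r = O(n)$ and $f = O(n^2)$, the correction is at most $e^{O(rf/\bar n)} = e^{O(n)}$, which is comfortably absorbed by the dominant decay $(1-p)^f = n^{-\Theta(n)}$ in that regime. This ensures the Model A asymptotics of Prop.~\ref{prop:unioA} transfer to Model B without change, completing the argument.
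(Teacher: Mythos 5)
Your proposal is correct and follows essentially the same route as the paper's proof: the reduction to showing $\sum_{k\geq 2}\mathbb{P}(F_k)=o(1)$ via Theorem~\ref{th:resdet} and Lemma~\ref{lem:TunionFk}, and the count $\binom{n}{k}\binom{n-k}{k-1}\binom{k}{2}^{k-1}$ with the remaining $N-2k+2$ edges placed among the allowed positions, are exactly the paper's steps, and your hypergeometric ratio $\binom{\bar n-r-f}{N-r}/\binom{\bar n}{N}$ is identical to the paper's $\binom{L}{N-2k+2}/\binom{\binom{n}{2}}{N}$. The only difference is presentational -- the paper evaluates that ratio directly via Stirling and a Taylor expansion of $\log(1-Q)$ with $Q=1-L/\binom{n}{2}$, while you compare it to the Model A factor $p^{r}(1-p)^{f}$ with $p=N/\binom{n}{2}$ and reuse the Model A asymptotics; your multiplicative error is in fact $e^{O(rf/\bar n)}=e^{O(k^2/n)}$, i.e.\ at most a constant raised to the power $k$ uniformly in $2\leq k\leq\lceil (n+1)/2\rceil$, so it is absorbed into the constant $C$ of the geometric bound for every $k$, not only in the regime $k\sim n/2$ that you single out.
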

\begin{proof}

Recall that $\cT^n$, the set of thin graphs, can be decomposed as the disjoint union of the sets $F_k$ defined in Def.~\ref{def:defK}. We will bound the measure of the set
\begin{align*}
    \bP\left(\cup_{k=2}^{\lceil(n + 1)/2\rceil} F_k\right) = \sum_{k = 2}^{\lceil(n + 1)/2\rceil} \bP(F_k)
\end{align*}
and show that it is $o(1)$.

Let $k \geq 2$ and consider a graph $G=(V,E)$ in $F_k$, i.e. $G$ has an independent set $I$ of cardinality $k$, with $|N(I)|=k-1$ and is so that there is no independent set $J$ in $G$ with $|J|=l$, $|N(J)|=l-1$. We can choose the $k$ vertices of the independent set $I \subset V$ in $n \choose k$ ways. Then, we can choose their $k-1$ neighbors in $n - k \choose k - 1$ ways. We can show, similarly to what is done in the proof of Prop.~\ref{prop:unioA},  that every vertex $v \in N(I)$ must be adjacent to at least  {\it two} vertices $u_1, u_2 \in I$. Hence,  $2(k-1)$ edges are needed to satisfy the above conditions on $I$ and its neighbor set. For the placement of the remaining $N - 2(k - 1)$ edges there are $\binom{n}{2} - \binom{k}{2} - k(n - 2k + 1) - 2(k - 1)$ possibilities. 

Therefore, setting
$$L=\binom{n}{2} - \binom{k}{2} - k(n - 2k + 1) - 2(k - 1),$$
we get the upper bound:

$$
\mathbb{P}(F_k) \leq \binom{n}{k}\binom{n - k}{k - 1}\binom{k}{2}^{k - 1} \binom{L}{N - 2k + 2}/{\binom{\binom{n}{2}}{N}},
$$
where the term $\binom{n}{k}$ accounts for the choice of $k$ distinct vertices without edges between them, the term $\binom{n - k}{k - 1}$ account for the choice of their $k - 1$ neighbors, the term $\binom{k}{2}^{k - 1}$ accounts for the condition that $v \in N(I)$ implies that $v$ is the neighbor of at least two vertices in $I$, the term $\binom{L}{N - 2k + 2}$ accounts for the choice of the remaining $L$ edges in the graph, and the term $\binom{\binom{n}{2}}{N}$ accounts for all possible choices of the $N$ edges.

First, we consider the terms $F_k$ with $2 \leq k < \lceil \frac{(n + 1)}{2} \rceil$. Expanding the binomial coefficients and using Stirling's approximation \eqref{eq:stirling}, we get
\begin{align*}
    \mathbb{P}(F_k) &\ll \frac{n^{n+0.5}k^{k-1}(k-1)^{k-1}}{k^{k+0.5}(k-1)^{k-0.5}(n-2k + 1)^{n-2k+1.5}2^{k-1}}\left(\frac{N}{\binom{n}{2}}\right)^{2(k-1)}\left(\frac{L}{\binom{n}{2}}\right)^{N-2k + 2}.
\end{align*}

Next, we combine terms of the same power and obtain
\begin{small}
\begin{align*}
\mathbb{P}(F_k) &\ll \left(\left(1 + \frac{2k - 1}{n - 2k + 1}\right)^{\frac{n - 2k + 1.5}{k}} \frac{n^{2 - \frac{1}{k}}}{2} \left(\frac{\log(n) + c + o(1)}{n - 1}\right)^{\frac{2(k-1)}{k}} (1 - Q)^{\frac{N - 2k + 2}{k}}\right)^k,
\end{align*}
\end{small}

where
\begin{align*}
0 \leq Q = 1 - \frac{L}{\binom{n}{2}} = \frac{\binom{k}{2} + kn - 2k^2 + 3k - 2}{\binom{n}{2}} = \frac{-3k^2 + 5k + 2kn - 4}{n^2 - n} < 1.
\end{align*}

We note that $\frac{n - 2k + 1.5}{k} < \frac{10(n - 2k + 1)}{2k - 1}$ and recall that the function $(1 + \frac{1}{x})^x$ is bounded between $0$ and $e$ for $x > 0$. Let $C > 0$ be such that 
$$\left(1 + \frac{2k - 1}{n - 2k + 1}\right)^{\frac{n - 2k + 1.5}{k}}\left(\frac{\log n + c + o(1)}{\log n}\right)^2 < 2C.$$
Therefore,
\begin{align*}
\mathbb{P}(F_k)
    &\ll \left(C n^{\frac{1}{k}}(\log n)^2  (1 - Q)^{\frac{N}{k}}\right)^k.
\end{align*}

Expanding $\log (1 - Q)$ in Taylor series, we get

\begin{align*}
(1 - Q)^{\frac{N}{k}}  &= \exp\left(\log\left(1 - Q\right)\frac{n(\log(n) + c + o(1))}{2k}\right) \\
&= \exp\left(\frac{(3k^2 - 5k - 2kn + 4)\log(n)(1 + o(1))}{2kn}\right) \\
&= \exp\left(-\left(1 - \frac{3k}{2n}\right)\log(n)(1 + o(1))\right) \\
&= n^{-\left(1 - \frac{3k}{2n}\right)(1 + o(1))}.
\end{align*}

Therefore, 

\begin{align}\label{mufk1}
\mathbb{P}(F_k) \ll \left(\frac{C(\log n)^2}{n^{(1 - \frac{3k}{2n} - \frac{1}{k})(1 + o(1))}}\right)^k \ll \left(C\frac{(\log n)^2}{n^{0.01}}\right)^k,
\end{align}
where the last inequality follows from the fact that $$
\frac{3k^2}{2(0.99k - 1)} \leq n
$$
for every $2 \leq k \leq \frac{n + 1}{2}$.

In a similar fashion, we can show that for $n$ odd number, and $k = \frac{n - 1}{2}$, we have
\begin{align}\label{mufk2}
    \mathbb{P}(F_k) \ll \left(C\frac{(\log n)^2}{n^{0.01}}\right)^k.
\end{align}
Combining \eqref{mufk1} and \eqref{mufk2}, we have that 
$ \sum_{k = 2}^{\lceil(n + 1)/2\rceil} \bP(F_k) = o(1)$, which concludes the proof.

\end{proof}

\begin{Proposition}\label{prop:mid2}
Let $N= \frac{n}{2}(\log(n) + c)$, $c, \mu \in \R$,  $\lambda = e^{-c}$ and consider a graph $G = \G^n_{N,M}$. The probability that all components of $G$ contain a node with a self-loop is given by
\begin{equation}\mathbb{P}(\bL(G) = 1)= \begin{cases} e^{-\lambda} +o(1)&\mbox{ for } M(n) = \mu + o(1), 0 < \mu. \\ e^{-\lambda(1-\mu)} + o(1) & \mbox{ for } M(n) = \mu n +o(n), 0 < \mu \leq 1

  \end{cases}\end{equation}

\end{Proposition}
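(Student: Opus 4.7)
The plan is to mirror the proof of Proposition~\ref{prop:mid}, adapting each step from model A to the combinatorial setting of model B. In model B, the $M$ loops are distributed, independently of the $N$ non-loop edges, as a uniformly random $M$-subset of the $n$ possible loop positions. Therefore, conditioning on the event $\mathcal{J}^{n,k}$ (exactly $k$ isolated vertices together with a single large component of size $n-k$) decouples cleanly from the loop placement, and the event $\{\bL(G)=1\}$ becomes: \emph{every} one of the $k$ isolated vertices is among the $M$ chosen loop positions, \emph{and} at least one loop falls in the giant component. A direct count gives
\begin{equation*}
\mathbb{P}(\bL(G)=1 \mid \mathcal{J}^{n,k}) = \frac{\binom{n-k}{M-k}}{\binom{n}{M}} - \mathbf{1}[M=k]\cdot\frac{1}{\binom{n}{M}},
\end{equation*}
where the first term is the probability that the loop set contains the $k$ isolated vertices, and the subtracted term removes the unique configuration in which the loops coincide exactly with those isolated vertices (so the giant component has no loop).

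Next, I would establish the model B analogue of Corollary~\ref{coro:ia}: combining Lemma~\ref{lemma:tech2} (which rules out a second component of size between $2$ and $n/2$) with the symmetry argument that any two components of size $>1$ cannot both exceed $n/2$, one gets $\mathbb{P}(\mathcal{I}^{n,k}\setminus \mathcal{J}^{n,k}) = o(1)$ uniformly. Then Proposition~\ref{prop:numisomodelB} tells us that $\mathbb{P}(\mathcal{J}^{n,k})\to \lambda^k e^{-\lambda}/k!$ for each fixed $k$. So the formula to analyse is
\begin{equation*}
\mathbb{P}(\bL(G)=1) = \sum_{k=0}^{n} \mathbb{P}(\mathcal{J}^{n,k})\,\mathbb{P}(\bL(G)=1\mid \mathcal{J}^{n,k}) + o(1).
\end{equation*}

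For regime (a), $M=\mu+o(1)$ with $0<\mu$ constant, the ratio $\binom{n-k}{M-k}/\binom{n}{M}$ is $O(n^{-k})$ for every $k\geq 1$, so all terms with $k\geq 1$ vanish asymptotically. The $k=0$ term contributes $\mathbb{P}(\mathcal{J}^{n,0})(1-o(1)) \to e^{-\lambda}$, giving the first case. For regime (b), $M=\mu n+o(n)$ with $0<\mu\leq 1$, for each fixed $k$,
\begin{equation*}
\frac{\binom{n-k}{M-k}}{\binom{n}{M}} = \frac{M(M-1)\cdots(M-k+1)}{n(n-1)\cdots(n-k+1)} \longrightarrow \mu^k,
\end{equation*}
and the subtracted indicator term is negligible. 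Passing to the limit term-by-term and then summing,
\begin{equation*}
\sum_{k=0}^{\infty} \frac{\lambda^k e^{-\lambda}}{k!}\mu^k = e^{-\lambda} e^{\lambda\mu} = e^{-\lambda(1-\mu)},
\end{equation*}
which is the desired expression.

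The main obstacle is justifying the interchange of limit and infinite sum: the sum over $k$ actually runs up to $n$, and one must control the tail $k\to\infty$ uniformly in $n$. This requires a crude a priori bound on $\mathbb{P}(\mathcal{J}^{n,k})$ (e.g.\ via $\mathbb{P}(\mathcal{I}^{n,k}) \leq \binom{n}{k}(1-N/\binom{n}{2})^{k(n-k)+\binom{k}{2}}$ or equivalent) showing that the sequence is dominated by a summable envelope; combined with the pointwise convergence from Proposition~\ref{prop:numisomodelB} and the trivial bound $\mathbb{P}(\bL(G)=1\mid\mathcal{J}^{n,k})\leq \min(1,\mu^k)$ in regime (b), dominated convergence then seals the argument. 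Everything else is routine bookkeeping essentially identical to the derivation in Proposition~\ref{prop:mid}.
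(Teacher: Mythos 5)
Your proposal is correct and follows essentially the same route as the paper's proof: condition on the events $\mathcal{J}^{n,k}$, compute $\mathbb{P}(\bL(G)=1\mid\mathcal{J}^{n,k})$ as the hypergeometric ratio $\binom{n-k}{M-k}/\binom{n}{M}$ (with the $M=k$ edge case handled), transfer from $\mathcal{J}^{n,k}$ to $\mathcal{I}^{n,k}$ via Lemma~\ref{lemma:tech2}, and sum against the Poisson limit of Proposition~\ref{prop:numisomodelB}. The only difference is cosmetic: where the paper justifies the limit of the sum by expanding $M!/(M-k)!$ and $(n-k)!/n!$ explicitly and splitting the sum at $\log\log M$ with a Poisson tail bound, you invoke tightness/dominated convergence using the bound $\binom{n-k}{M-k}/\binom{n}{M}\leq (M/n)^k$, which is a legitimate (and somewhat cleaner) way to reach the same conclusion.
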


\begin{proof} 
The proof follows the same general approach as the one of Prop.~\ref{prop:mid}, though the specifics are rather different.  As before, we let ${\cal J}^{n,k}$ be the event that the graph $G$ has $k$ isolated vertices and a component of size $n-k$, and ${\cal I}^{n,k}$ the event that the graph has $k$ isolated vertices.  Conditioned on  ${\cal J}^{n,k}$,  the probability that at least one component has no self-loop is equal to 

\begin{align}
\mathbb{P}((\bL(\mathcal{G}^n) = 0) \mid  {\cal J}^{n, k}) = \left\lbrace \begin{aligned} 1-\frac{{n-k \choose M-k}}{{n \choose M}}= 1- \frac{(n-k)!M!}{n!(M-k)!}& \mbox{ when } k < M \\
1 & \mbox{ when } k \geq M
\end{aligned}\right.
\end{align}

From Lemma~\ref{lemma:tech2}, we have that  $\bP({\cal I}^{n,k} - {\cal J}^{n,k})=o(1)$, from Prop.~\ref{prop:numisomodelB},  we know that $\bP(\mathcal{I}^{n-k})$ converges to a Poisson distribution with parameter $\lambda=e^{-c}$. Therefore, we obtain $$\mathbb{P}((\bL(\mathcal{G}^n) = 1)) = \sum_{k=0}^\infty \mathbb{P}((\bL(\mathcal{G}^n) = 1) \mid  {\cal J}^{n, k}) \mathbb{P}({\cal I}^{n,k})+ o(1).$$

We first assume that  $M = \mu n +o(n)$. We have  
\begin{align*}\frac{M!}{(M-k)!} &= M(M-1)\cdots(M-k+1)\\
&= M^k(1-\frac{1}{M}) \cdots (1-\frac{k}{M})\\
&= M^k (1-\frac{s_2}{M}+\cdots+\frac{s_{k}}{M^k}),
\end{align*}
where $s_i=\begin{bmatrix} k \\ k-i\end{bmatrix}$ is  the unsigned Stirling number of the first kind,  and it is easy to see that $s_i \leq k!$, $1 \leq i \leq k$. Similarly, we have

$$\frac{(n-k)!}{n!} =  n^{-k}(1-\frac{s_1}{n}+\cdots\pm \frac{s_{k-1}}{n^k})^{-1}.$$

We now evaluate the probability as follows: first, we split the sum according to

\begin{align*}
        \sum_{k=0}^{M-1} \frac{\lambda^k e^{-\lambda}}{k!} \frac{(n-k)!M!}{n!(M-k)!} &=      \sum_{k=0}^{\log \log M} \frac{\lambda^k e^{-\lambda}}{k!} \frac{(n-k)!M!}{n!(M-k)!} + \sum_{k=\log \log M+1 }^{M-1} \frac{\lambda^k e^{-\lambda}}{k!} \frac{(n-k)!M!}{n!(M-k)!} 
\end{align*}

For the first term,
\begin{align*}
        \sum_{k=0}^{\log \log M} \frac{\lambda^k e^{-\lambda}}{k!} \frac{(n-k)!M!}{n!(M-k)!}&= \sum_{k=0}^{\log \log M} \left(\frac{\lambda M}{n}\right)^k \frac{e^{-\lambda}}{k!} \frac{(1-\frac{s_1}{M}+\cdots)}{(1-\frac{s_1}{n}+\cdots)}\\   &=e^{-\lambda+\lambda \mu} {\cal P}_{\lambda \mu+o(1)}(X <\log  \log M)+o(1),
\end{align*}
where the last line comes from the fact that $(\log \log x)!/x = o(1/\log x)$ and $s_i \leq (\log \log M)!$, and the fact that ${\cal P}_{\lambda \mu+o(1)}(X <\log  \log M)=1+o(1)$.

For the second term, 

\begin{align*}
        \sum_{k=\log \log M+1 }^{M-1} \frac{\lambda^k e^{-\lambda}}{k!} \frac{(n-k)!M!}{n!(M-k)!} &\leq       \sum_{k=\log \log M+1}^{n} \frac{\lambda^k e^{-\lambda}}{k!}\\
        &\leq {\cal P}_{\lambda}(X > \log \log M) = o(1).
\end{align*}

We now consider the case where $M(n)=\mu +o(1)$, $\mu >0$, i.e. the number of edges converges to a finite, non-zero constant. In this case, $(n-k)!/n! = (n(n-1)\cdots(n-k))^{-1} = n^{-k}+o(1)$ and we obtain 

\begin{align}
\mathbb{P}((\bL(\mathcal{G}^n) = 1) &= \sum_{k=0}^{M-1}    \lambda^k \frac{e^{-\lambda}}{k!}\frac{(n-k)!M!}{n!(M-k)!} \\
& = \sum_{k=0}^{M-1}    \left(\frac{\lambda}{n}\right)^k \frac{e^{-\lambda}}{k!}\frac{M!}{(M-k)!} + o (1).
\end{align}

Since $M!/(M-k)! \geq 1$ for $ k = 0,\ldots, M-1$, we have
\begin{align}
    \mathbb{P}((\bL(\mathcal{G}^n) = 1) & \geq \sum_{k=0}^{M-1}    \left(\frac{\lambda}{n}\right)^k \frac{e^{-\lambda}}{k!} + o (1)\\
    &\geq e^{\lambda/n-\lambda} \sum_{k=0}^{M-1}    \left(\frac{\lambda}{n}\right)^k \frac{e^{-\lambda/n}}{k!} + o (1)\\ & 
    \geq e^{-\lambda}  {\cal P}_{\lambda/n}(X < M) + o (1).
\end{align}

We also have $ \frac{M!}{(M-k)!}   \leq M^k$. Therefore 
\begin{align}
    \mathbb{P}((\bL(\mathcal{G}^n) = 1) & \leq \sum_{k=0}^{M-1}    \left(\frac{\lambda M}{n}\right)^k \frac{e^{-\lambda}}{k!} + o (1)\\
    &\leq e^{\lambda M/n-\lambda} \sum_{k=0}^{M-1}    \left(\frac{\lambda M}{n}\right)^k \frac{e^{-\lambda M/n}}{k!} + o (1)\\ & 
    \leq e^{-\lambda}  {\cal P}_{\lambda M /n}(X < M) + o (1).
\end{align}
As in the first case, ${\cal P}_{\lambda M /n}(X < M) \to 1$ and the upper and lower bounds agree to $e^{-\lambda}.$
\end{proof}

We can summarize the above results in the following Theorem, whose proof is similar to the one of Theorem~\ref{theoremtable1}, but relies on Prop.~\ref{prop:mid2} instead of Prop.~\ref{prop:mid}.
\begin{Theorem}\label{th:table2}
 Consider the random graphs $G=\mathcal{G}^n_{N, M}$ and let $\lambda = e^{-c}$ for a parameter $c \in \R$. Then, the probability that $G$ is structurally stable is given by
 \renewcommand{\arraystretch}{1.5}
 \begin{center}
 \begin{tabular}{||c| c c c||} 
 \hline
 \diagbox[width=7em]{$M(n)$}{$N(n)$} &$ \frac{n(\log(n) - \omega(1))}{2}$ &$\frac{n(\log(n) + c + o(1))}{2}$ &  $\frac{n(\log(n) + \omega(1))}{2}$ \\ [0.5ex] 
 \hline \hline
  $ \mu  + o(1)$, $0<\mu $ & $o(1)$ & $e^{-\lambda} + o(1)$ & $1 - o(1)$ \\ 
  \hline
 $ \mu n + o(n)$ for $0< \mu < 1$ & $o(1)$ (\mbox{for} $0<\mu < 1$)  & $e^{-\lambda(1-\mu)} + o(1)$ & $1 - o(1)$ \\
  \hline
\end{tabular}
\end{center} where we recall that $\omega(1)$ is any positive unbounded function of $n$. 
\end{Theorem}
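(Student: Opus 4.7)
The plan is to mirror the proof of Theorem~\ref{theoremtable1}: extract the middle column directly from Prop.~\ref{prop:mid2}, and then deduce the left and right columns by a monotonicity argument in the parameter $N$.

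For the middle column, the Proposition immediately preceding Prop.~\ref{prop:mid2} establishes $\mathbb{P}(\bS(G)=0)=\mathbb{P}(\bL(G)=0)+o(1)$ for $G=\mathcal{G}^n_{N,M}$, and hence $\mathbb{P}(\bS(G)=1)=\mathbb{P}(\bL(G)=1)+o(1)$. Combining this identity with the two cases of Prop.~\ref{prop:mid2} yields $e^{-\lambda}+o(1)$ in the first row (when $M=\mu+o(1)$, $\mu>0$) and $e^{-\lambda(1-\mu)}+o(1)$ in the second row (when $M=\mu n+o(n)$, $0<\mu\leq 1$).

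For the outer columns I would first state the monotonicity principle for model~B that plays the role of Theorem~\ref{ivanpenev}: for any monotone graph property $\bH$ and any $N_1\leq N_2$, $M_1\leq M_2$,
\[
\mathbb{P}(\bH(\mathcal{G}^n_{N_1,M_1})=1)\leq\mathbb{P}(\bH(\mathcal{G}^n_{N_2,M_2})=1).
\]
The proof is a standard coupling: sample $\mathcal{G}^n_{N_1,M_1}$, then pick $N_2-N_1$ further non-loop edges and $M_2-M_1$ further loops uniformly from those still absent; the resulting graph contains the original one and has the law of $\mathcal{G}^n_{N_2,M_2}$. Since $\bS$ is monotone, holding $M(n)$ fixed in one of the two regimes and writing $N_\pm=\tfrac{n(\log n\pm\omega(1))}{2}$ and $N_c=\tfrac{n(\log n+c+o(1))}{2}$, this yields, asymptotically and for every $c\in\R$,
\[
\mathbb{P}(\bS(G)=1\mid N_-)\leq\mathbb{P}(\bS(G)=1\mid N_c)\leq\mathbb{P}(\bS(G)=1\mid N_+).
\]
Letting $c\to-\infty$ forces $\lambda=e^{-c}\to\infty$, so both $e^{-\lambda}$ and $e^{-\lambda(1-\mu)}$ (with $\mu<1$) tend to $0$, giving the left column. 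Letting $c\to+\infty$ forces $\lambda\to 0$, so both middle-column expressions tend to $1$, giving the right column.

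The only delicate point is the restriction $\mu<1$ in the second row's left entry: when $\mu=1$, every vertex carries a loop, so $\bL\equiv 1$ and the graph trivially admits a Hamiltonian decomposition consisting of self-loops, hence $\bS=1$ with probability $1$ even for very small $N$; the claim $o(1)$ would fail at that corner. Apart from this boundary issue, the argument is a direct transcription of the Model~A proof, with Prop.~\ref{prop:mid2} and the Model~B coupling replacing their Model~A counterparts, so no genuinely new obstacle arises.
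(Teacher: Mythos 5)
Your proposal is correct and follows essentially the same route as the paper: the middle column is read off from Prop.~\ref{prop:mid2} together with the preceding Model~B proposition equating $\mathbb{P}(\bS(G)=0)$ and $\mathbb{P}(\bL(G)=0)$ up to $o(1)$, and the outer columns follow by monotonicity in $N$ with $c\to\pm\infty$, exactly as in the proof of Theorem~\ref{theoremtable1}. Your explicit coupling argument for the Model~B analogue of Theorem~\ref{ivanpenev}, and the remark about the $\mu=1$ boundary, are welcome details that the paper leaves implicit.
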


\paragraph{Hamiltonian cycles and stability} It was shown in~\cite{komlos1983limit}, based on ideas from~\cite{posa1976hamiltonian}, that for a random graph on $n$ nodes with 
$$N=\frac{n}{2}(\log n+\log\log n + c)$$ edges, the (asymptotic) probability of it containing a Hamiltonian {\it cycle} obeys
$ P= e^{-\lambda},$ for $\lambda =e^{-c}.$  From Theorem~\ref{theoremtable1}, it is clear that a graph with {\it one} self-loop and a Hamiltonian {\it cycle} is stable, hence

$$ P(\G_{N,1}^n \mbox{ stable} )\geq e^{-\lambda},$$ for $N$ as above.
We see from Theorem~\ref{th:table2} that, when constraining the graph to have a unique self loop, allowing Hamiltonian decompositions (instead of only allowing cycles)  improves the rate by a term of $\log \log n$.

\section{Summary and outlook}

A zero-pattern is a vector space of matrices with entries either fixed to zero or  arbitrary real.  We can represent these vector spaces as $0/\ast$ matrices, where the $\ast$ means that the entry is arbitrary real. We have investigated in this paper the structural stability of  zero patterns where the positions of the $\ast$ are symmetric about the main diagonal, but otherwise randomly placed. As is usually done, we represent zero-patterns by their corresponding digraphs (which are here symmetric digraphs), and called these digraphs stable if the corresponding zero-patterns contained a stable (Hurwitz) matrix.  We evaluated the probability that a random symmetric digraph sampled from an Erd\"os-R\'enyi model is stable. In particular, we considered 
\begin{itemize}\item a model in which one places a non-diagonal $\ast$-entry at a given position in the zero-pattern according to a Bernoulli random variable with parameter $p$, and a diagonal $\ast$-entry according to a Bernoulli with parameter $q$, with all random variables pairwise independent (model A),
\item  a model in which one places exactly $N$ non-diagonal $\ast$-entries  and $M$ diagonal $\ast$-entries in the zero pattern (model B). 

\end{itemize}
For each model, we derived the asymptotic (in the number of nodes) probability that a zero pattern sampled from them is stable, and the results are summarized in Theorems~\ref{theoremtable1} and~\ref{th:table2} for models A and B respectively.

In the sparse regime (left column), both models behave similarly and stable graphs are rare. Indeed, with few edges, the graphs have many connected components, and the probability of each of them having a self-loop was found to be vanishingly small. 

In the transitional (middle column) and dense graphs (right column) regimes, the results are affected by the number of self-loops in each model. In model $A$, the ratio of nodes with self-loop over the total number of nodes is a random variable, with expected value $\mu$, whereas in model $B$, this ratio is {\it deterministic}, with value $\mu$ as well. For both models, when the expected ratio is small (first row of the tables), we find that the probability that the graph is stable is approximately equal to the probability that it is connected and that it contains  {\it at least} one self-loop. 

This result is fairly intuitive in view of the necessary and sufficient conditions for a graph to be stable given in Theorem~\ref{th:stabcond}. The difference in the two models in these regimes is explained by the fact that in model $B$, $\mu>0$ guarantees that the graph contains {\it at least one} self-loop, whereas in model $A$, it does not (since the ratio is a random variable).

When the ratio grows larger, one can show that its  variance in the case of  model $A$ converges to zero, and its expected value is the same as the one of model B. Hence both models behave similarly in that regime.

\bibliographystyle{plain}
\bibliography{references.bib}
\end{document}